\theoremstyle{plain}
\newtheorem{thm}{Theorem}[section]
\newtheorem{prop}[thm]{Proposition}
\newtheorem{lem}[thm]{Lemma}
\newtheorem{main}{Main Theorem}
\theoremstyle{remark}
\theoremstyle{definition}
\newcommand{\I}{\mathbf{I}}                       % unit interval
\newcommand{\cl}{\operatorname{cl}}               % closure
\newcommand{\card}{\operatorname{card}}	          % cardinality
\newcommand{\diam}{\operatorname{diam}}           % diameter
\newcommand{\fin}{\operatorname{Fin}}             % finite sets
\newcommand{\cpt}{\operatorname{Comp}}             % compact sets
\title[Topological structures of hyperspaces of finite sets]{Topological structures of hyperspaces of finite sets in non-separable metrizable spaces}
\author[K.~Koshino]{Katsuhisa Koshino}
\thanks{This work was supported by JSPS KAKENHI Grant Number 15K17530}
\address{Faculty of Engineering, Kanagawa University, Yokohama, 221-8686, Japan}
\email{pt120401we@kanagawa-u.ac.jp}
\subjclass[2010]{Primary: 54B20, Secondary: 54F65, 57N20}
\keywords{hyperspace, the Vietoris topology, the Hausdorff metric, Hilbert space, $Z$-set, Strong $Z$-set, strong universality, discrete cells property}
\begin{document}

\begin{abstract}
Let $\fin(X)$ be the hyperspace consisting of non-empty finite subsets of a space $X$ endowed with the Vietoris topology.
In this paper, we characterize a metrizable space $X$ whose hyperspace $\fin(X)$ is homeomorphic to the linear subspace spanned by the canonical orthonormal basis of a non-separable Hilbert space.
\end{abstract}

\maketitle

\section{Introduction}

Throughout this paper, spaces are metrizable, maps are continuous and $\kappa$ is an infinite cardinal.
By $\fin(X)$, we denote the hyperspace of non-empty finite sets in a space $X$ endowed with the Vietoris topology.
Let $\ell_2(\kappa)$ be the Hilbert space of density $\kappa$ and $\ell_2^f(\kappa)$ be the linear subspace spanned by the canonical orthonormal basis of $\ell_2(\kappa)$.
Topological structures of hyperspaces are classical subjects and have been studied in infinite-dimensional topology.
In 1985, D.W.~Curtis and N.T.~Nhu \cite{CN} characterized a space $X$ whose hyperspace $\fin(X)$ is homeomorphic to $\ell_2^f(\omega)$ as follows:

\begin{thm}\label{l2f}
A space $X$ is non-degenerate, connected, locally path-connected, strongly countable-dimensional\footnote{A space $X$ is strongly countable-dimensional if it is written as a countable union of finite-dimensional closed subsets.} and $\sigma$-compact\footnote{A space $X$ is called to be $\sigma$-(locally )compact if it is a countable union of (locally )compact subsets.} if and only if $\fin(X)$ is homeomorphic to $\ell_2^f(\omega)$.
\end{thm}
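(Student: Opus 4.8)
The plan is to prove both implications through the topological characterization of $\ell_2^f(\omega)$ as an absorbing set. Recall that, up to homeomorphism, $\ell_2^f(\omega)$ is the unique space which is a $\sigma$-compact, strongly countable-dimensional absolute retract, is strongly universal for the class $\mathcal{D}$ of strongly countable-dimensional compacta, and in which every compactum is a $Z$-set (the Bestvina--Mogilski characterization). Accordingly, the substance of the forward implication is to verify these four properties for $Y:=\fin(X)$; the reverse implication is comparatively soft and follows by transporting the properties of $\ell_2^f(\omega)$ back along a natural copy of $X$ inside $Y$.

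Going forward, the first three properties are the ``soft'' ones. For $\sigma$-compactness, write $X=\bigcup_n K_n$ with each $K_n$ compact; then $\fin(X)=\bigcup_{n,k}\fin_{\le k}(K_n)$, where $\fin_{\le k}$ denotes sets of cardinality at most $k$, and each $\fin_{\le k}(K_n)$ is a continuous image of $K_n^k$, hence compact. For strong countable-dimensionality, start from a decomposition $X=\bigcup_j F_j$ into closed finite-dimensional sets; since the $k$-th symmetric product of a finite-dimensional compactum is finite-dimensional, one exhibits $\fin(X)$ as a countable union of closed finite-dimensional pieces. That $\fin(X)$ is an AR is precisely where connectedness and local path-connectedness of $X$ enter: by the known characterizations of when $\fin(X)$ is an ANR/AR, these hypotheses guarantee that $\fin(X)$ is an ANR, and the cone-like structure obtained by sliding finite sets together along paths renders it contractible, hence an AR. Finally, every compactum $A\subseteq\fin(X)$ is a $Z$-set: using non-degeneracy together with local path-connectedness, one perturbs any map of a cube into $\fin(X)$ off $A$ by nudging each finite set slightly --- adding or displacing points along short paths --- so that the image misses $A$.

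The main obstacle is strong $\mathcal{D}$-universality. Here, given a strongly countable-dimensional compactum $K$, a closed $C\subseteq K$ on which a map $f:K\to\fin(X)$ is already a $Z$-embedding, and a positive $\varepsilon$, I must produce a $Z$-embedding $g:K\to\fin(X)$ with $g|_C=f|_C$ and $d(f,g)<\varepsilon$. The construction proceeds by first replacing $f$ away from $C$ with a map whose values are finite sets of controlled, growing cardinality, then using local path-connectedness of $X$ to spread these sets apart, and finally invoking general-position arguments --- available precisely because $K$ and the finite-dimensional skeleta of $X$ are finite-dimensional --- to eliminate all remaining coincidences and make $g$ injective while keeping the image a $Z$-set. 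Threading the relative condition $g|_C=f|_C$ through this approximation is the delicate part and the heart of the argument.

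For the converse, suppose $\fin(X)\cong\ell_2^f(\omega)$. Since $x\mapsto\{x\}$ embeds $X$ as a closed subspace $\fin_1(X)$ of $\fin(X)$, and closed subspaces of $\sigma$-compact, strongly countable-dimensional spaces inherit both properties, $X$ is $\sigma$-compact and strongly countable-dimensional. Non-degeneracy is clear, as a one-point $X$ yields a one-point $\fin(X)$. For connectedness, were $U\subseteq X$ a proper non-empty clopen set, then $\{F:F\subseteq U\}$ would be a proper non-empty clopen subset of $\fin(X)$, contradicting that $\ell_2^f(\omega)$, being an AR, is connected. Local path-connectedness of $X$ is then extracted from the local contractibility of $\fin(X)$ by analysing the local structure near the singletons. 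This completes the outline.
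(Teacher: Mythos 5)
The first thing to note is that the paper does not prove this statement at all: Theorem~\ref{l2f} is quoted from Curtis and Nhu \cite{CN} and used as a black box (the separable case of the Main Theorem is simply deferred to it). So your attempt has to be measured against the known proof, whose architecture you have correctly identified: verify that $\fin(X)$ satisfies a characterization of $\ell_2^f(\omega)$ as an absorbing set, and transport properties back along the closed copy $x \mapsto \{x\}$ for the converse. This is also exactly the strategy the present paper follows for its non-separable generalization (via Theorem~\ref{DCP-char.}). Your soft steps are essentially right: $\sigma$-compactness and strong countable-dimensionality of $\fin(X)$ via the pieces $\{A \mid \card{A}\le k,\ A\subset K_n\}$, the ANR/AR step by citation, and in the converse the clopen set $\{F \mid F \subset U\}$ and heredity of $\sigma$-compactness and strong countable-dimensionality to the closed copy of $X$.

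The genuine gaps are exactly where the mathematical content of Curtis--Nhu's theorem lies. (a) Strong universality: you outline a plan and then concede it is ``the delicate part and the heart of the argument'' without carrying it out; moreover the mechanism you gesture at --- general position, ``available because $K$ and the finite-dimensional skeleta of $X$ are finite-dimensional'' --- is not available here. $X$ is an arbitrary Peano-type space with no useful skeleta, the finite-dimensional compact pieces of $\fin(X)$ are not manifolds, and injectivity of maps into $\fin(X)$ is not achieved by small perturbations in general position but by hyperspace-specific constructions (distinguishing points by the pattern and cardinality of the finite sets); this is the content of Lemma~4.6 of \cite{CN}, which even the present paper does not reprove but reduces to (Proposition~\ref{str.univ.}). (b) The $Z$-set property for compacta: your plan of ``nudging each finite set \dots\ so that the image misses $A$'' ignores that a compactum $\mathcal{A} \subset \fin(X)$ contains finite sets of unbounded cardinality, so no perturbation adding a bounded number of points, nor any single cardinality bound, can be guaranteed to avoid $\mathcal{A}$. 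The standard (and the paper's) route is indirect: prove each $\fin^k(X)$ is a (strong) $Z$-set --- itself a delicate construction, cf.\ Proposition~\ref{str.Z_sigma} --- and then apply a union lemma for closed, topologically complete countable unions of (strong) $Z$-sets (Lemma~\ref{str.Z}, from \cite{Cu4}). (c) Less fatally, the characterization you invoke is misstated: the relevant theorem (essentially Mogilski's) takes strong universality for the class of \emph{finite-dimensional} compacta; substituting strongly countable-dimensional compacta makes your verification obligation strictly harder and additionally requires a nontrivial argument that $\ell_2^f(\omega)$ itself is strongly universal for that larger class, and the distinction between $Z$-sets and strong $Z$-sets needs care for the uniqueness theorem you appeal to. As it stands, the proposal is a correct road map with the destination unreached.
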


By an \textit{$X$-manifold}, we mean a topological manifold modeled on a space $X$.
In the general case, K.~Mine, K.~Sakai and M.~Yaguchi \cite{MSY} proved the following:

\begin{thm}
For a connected $\ell_2^f(\kappa)$-manifold $X$, the hyperspace $\fin(X)$ is homeomorphic to $\ell_2^f(\kappa)$.
\end{thm}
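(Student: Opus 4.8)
The plan is to reduce the statement to a topological characterization of the model space $\ell_2^f(\kappa)$ and then to verify the hypotheses of that characterization for $Y=\fin(X)$. Within infinite-dimensional topology, $\ell_2^f(\kappa)$ is the absorbing set for the class $\mathcal{M}$ of strongly countable-dimensional metrizable spaces of weight $\le\kappa$; accordingly, a metrizable space $Y$ is homeomorphic to $\ell_2^f(\kappa)$ as soon as $Y$ is a strongly countable-dimensional AR of weight $\kappa$, expressible as a countable union of strong $Z$-sets, having the discrete cells property, and being strongly $\mathcal{M}$-universal. Since any two absorbing sets for $\mathcal{M}$ of the same weight are homeomorphic (the uniqueness theorem for absorbing sets), it suffices to confirm that $\fin(X)$ possesses each of these properties.

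First I would dispose of the soft conditions. Because $X$ is an $\ell_2^f(\kappa)$-manifold it is locally homeomorphic to open subsets of the strongly countable-dimensional AR $\ell_2^f(\kappa)=\bigcup_m\langle e_1,\dots,e_m\rangle$, so $X$ is itself a strongly countable-dimensional ANR of weight $\kappa$; connectedness then upgrades this to the assertion that $\fin(X)$ is an AR, using that $\fin(-)$ preserves the ANR property for manifolds and that $\fin(X)$ is contractible whenever $X$ is connected. Writing $\fin(X)=\bigcup_n\fin_{\le n}(X)$ with $\fin_{\le n}(X)=\{A\in\fin(X):\card A\le n\}$, each stratum is a continuous image of the strongly countable-dimensional space $X^n$ under $(x_1,\dots,x_n)\mapsto\{x_1,\dots,x_n\}$, and, exploiting the finite symmetric-product structure of the strata, one checks that $\fin(X)$ inherits strong countable-dimensionality and weight $\weight(\fin(X))=\weight(X)=\kappa$.

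The same decomposition $\fin(X)=\bigcup_n\fin_{\le n}(X)$ is the candidate witness that $\fin(X)$ is a countable union of strong $Z$-sets: given a finite set and a map of a cube, the surplus coordinates available in the $\ell_2^f(\kappa)$-charts around the finitely many points of the set let one produce a small homotopy of the identity whose image has closure disjoint from $\fin_{\le n}(X)$, which is exactly the defining property of a strong $Z$-set. The two genuinely hard properties are the discrete cells property and strong $\mathcal{M}$-universality. For both I would fix a locally finite cover of $X$ by charts modeled on open subsets of $\ell_2^f(\kappa)$ and analyse a finite set $A$ through the pattern of charts its points occupy: a neighbourhood of $A$ in $\fin(X)$ is homeomorphic to an open subset of a finite product $\fin(U_1)\times\cdots\times\fin(U_k)$ of hyperspaces over disjoint charts. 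Universality is then assembled from local universality in these products together with a disjointification (colouring) argument keeping the points of distinct finite sets in distinct charts; the separable prototype, Theorem~\ref{l2f}, both guides the construction of the model embeddings and governs the separable test maps that arise in the verification.

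The main obstacle I anticipate is strong $\mathcal{M}$-universality in the non-separable regime. Embedding a given strongly countable-dimensional space of weight $\le\kappa$ as a $Z$-set, while only perturbing a prescribed map slightly and keeping it fixed on a closed set where it is already a $Z$-embedding, requires simultaneously controlling infinitely many charts and the combinatorics of how points of finite sets migrate between them; the clean product description $\fin(U_1)\times\cdots\times\fin(U_k)$ degenerates precisely for finite sets whose cardinality is unbounded along the target, and reconciling the local product embeddings into one global closed $Z$-embedding is where the real work lies. Once strong universality and the discrete cells property are secured, the uniqueness theorem for absorbing sets closes the argument and yields $\fin(X)\cong\ell_2^f(\kappa)$.
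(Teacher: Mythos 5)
Your plan hinges on the claim that $\ell_2^f(\kappa)$ is the absorbing set for, and in particular strongly universal for, the class $\mathcal{M}$ of \emph{all} strongly countable-dimensional metrizable spaces of weight $\le\kappa$. That claim is false, and this is a fatal gap. A $Z$-embedding has closed image, and every closed subset of $\ell_2^f(\kappa)$ is $\sigma$-locally compact: writing $\ell_2^f(\kappa)=\bigcup_{n<\omega}E_n$, where $E_n$ is the set of points whose support has cardinality exactly $n$, each $E_n$ is locally compact (near such a point, all points of $E_n$ have the \emph{same} support, so $E_n$ is locally an open subset of $\R^n$), and a closed set meets each $E_n$ in a closed, hence locally compact, subset. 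But $\mathcal{M}$ contains spaces that are not $\sigma$-locally compact, for instance the space of irrational numbers: it is $0$-dimensional and separable, hence lies in $\mathcal{M}$, yet it is topologically complete and nowhere locally compact, so by the Baire category theorem it cannot be a countable union of locally compact subsets. Such a space admits no closed embedding, let alone a $Z$-embedding, into $\ell_2^f(\kappa)$. Hence $\ell_2^f(\kappa)$ is not strongly $\mathcal{M}$-universal, and neither is $\fin(X)$ (which is homeomorphic to it, by the very theorem you are proving); so the central property you set out to verify can never be established. The same omission of $\sigma$-local compactness also undermines your list of conditions: ``a countable union of strong $Z$-sets'' carries no force unless those sets belong to the appropriate class, and the uniqueness theorem for absorbing sets you invoke is not available in this generality in the non-separable, incomplete setting.

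The correct target is the paper's Theorem~\ref{DCP-char.} (from \cite{Kos1}), which replaces your class $\mathcal{M}$ by the class of finite-dimensional \emph{compact} metrizable spaces and requires: $\fin(X)$ to be a strongly countable-dimensional, $\sigma$-locally compact AR of density $\kappa$ (Propositions~\ref{Borel} and \ref{ar}); strong universality for finite-dimensional compacta (Proposition~\ref{str.univ.}); every finite-dimensional compact subset to be a strong $Z$-set (Propositions~\ref{str.Z_sigma} and \ref{cpt.str.Z}); and the $\kappa$-discrete $n$-cells property for every $n<\omega$ (Proposition~\ref{dcp}). (The paper itself quotes the statement under review from \cite{MSY}; its Main Theorem subsumes it by verifying exactly these conditions, since in a connected $\ell_2^f(\kappa)$-manifold every neighborhood of every point has density $\kappa$.) Note that restricting universality to compacta is precisely what dissolves the obstacle you flagged: a map of a finite-dimensional compactum into $\fin(X)$ extends over a cube, its image $\mathcal{A}$ has compact, locally connected union $\bigcup\mathcal{A}\subset X$, and the entire perturbation problem is pushed into the separable hyperspace $\fin(\bigcup\mathcal{A})$, where the Curtis--Nhu machinery \cite{CN} applies directly; no chart-by-chart product decomposition or coloring over $\kappa$ many charts is needed for universality. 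The genuinely non-separable work occurs instead in the strong $Z$-set property of each $\fin^k(X)$ and in the $\kappa$-discrete cells property, which the paper handles by simplicial approximation, enlarging finite sets by $k+1$ auxiliary nearby points, respectively by points drawn from prescribed discrete $\kappa$-families. If you re-aim your verification at the four conditions of Theorem~\ref{DCP-char.}, your outline becomes viable; as written, it is built on a false characterization, and the steps you yourself identify as ``the real work'' are exactly the ones left unproved.
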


In this paper, we shall try to generalize their results as follows:

\begin{main}
A space $X$ is connected, locally path-connected, strongly countable-dimensional, $\sigma$-locally compact and for each point $x \in X$, any neighborhood of $x$ in $X$ is of density $\kappa$ if and only if $\fin(X)$ is homeomorphic to $\ell_2^f(\kappa)$.
\end{main}

\section{Preliminaries}

In this section, we will fix some notation and introduce a characterization of $\ell_2^f(\kappa)$-manifolds used in the proof of the main theorem.
Moreover, we show some lemmas concerning subdivisions of simplicial complexes that will be needed in the sequel.
The closed unit interval $[0,1]$ is denoted by $\I$.
Let $X = (X,d)$ be a metric space.
For a point $x \in X$ and subsets $A, B \subset X$, we put $d(x,A) = \inf\{d(x,a) \mid a \in A\}$ and $d(A,B) = \inf\{d(a,b) \mid a \in A, b \in B\}$.
For $\epsilon > 0$, we define $B_d(x,\epsilon) = \{x' \in X \mid d(x,x') < \epsilon\}$, $\overline{B_d}(x,\epsilon) = \{x' \in X \mid d(x,x') \leq \epsilon\}$, $N_d(A,\epsilon) = \{x' \in X \mid d(x',A) < \epsilon\}$ and $\overline{N_d}(A,\epsilon) = \{x' \in X \mid d(x',A) \leq \epsilon\}$.
By $\diam_d{A}$, we mean the diameter of $A$.
It is well-known that the topology of $\fin(X)$ coincides with the one induced by \textit{the Hausdorff metric} $d_H$ defined as follows:
 $$d_H(A,B) = \inf\{r > 0 \mid A \subset N_d(B,r), B \subset N_d(A,r)\}.$$

For maps $f : X \to Y$ and $g : X \to Y$, and for an open cover $\mathcal{U}$ of $Y$, $f$ is \textit{$\mathcal{U}$-close} to $g$ if for each point $x \in X$, there is a member $U \in \mathcal{U}$ that contains the both $f(x)$ and $g(x)$.
A closed subset $A$ of a space $X$ is called to be a \textit{(strong) $Z$-set} in $X$ provided that for each open cover $\mathcal{U}$ of $X$,
 there is a map $f : X \to X$ such that $f$ is $\mathcal{U}$-close to the identity map on $X$ and the (closure of )image $f(X)$ misses $A$.
These concepts play important roles in infinite-dimensional topology.
A \textit{$Z$-embedding} means an embedding whose image is a $Z$-set.
We say that a space $X$ is \textit{strongly universal} for a class $\mathcal{C}$ if the following condition holds:
\begin{itemize}
 \item For each space $A \in \mathcal{C}$, each closed subset $B$ of $A$, each open cover $\mathcal{U}$ of $X$, and each map $f : A \to X$ such that the restriction $f|_B$ is a $Z$-embedding, there exists a $Z$-embedding $g : A \to X$ such that $g$ is $\mathcal{U}$-close to $f$ and $g|_B = f|_B$.
\end{itemize}
A space $X$ has \textit{the $\kappa$-discrete $n$-cells property}, $n < \omega$, if the following condition is satisfied:
\begin{itemize}
 \item For every open cover $\mathcal{U}$ of $X$ and every map $f : \bigoplus_{\gamma < \kappa} A_\gamma \to X$,
  where each $A_\gamma = \I^n$,
  there exists a map $g : \bigoplus_{\gamma < \kappa} A_\gamma \to X$ such that $g$ is $\mathcal{U}$-close to $f$ and the family $\{g(A_\gamma)\}_{\gamma < \kappa}$ is discrete in $X$.
\end{itemize}
The author \cite{Kos1} gave the following characterization to an $\ell_2^f(\kappa)$-manifold:

\begin{thm}\label{DCP-char.}
A connected space $X$ is an $\ell_2^f(\kappa)$-manifold if and only if the following conditions are satisfied:
\begin{enumerate}
 \item $X$ is a strongly countable-dimensional, $\sigma$-locally compact ANR of density $\kappa$;
 \item $X$ is strongly universal for the class of finite-dimensional compact metrizable spaces;
 \item every finite-dimensional compact subset of $X$ is a strong $Z$-set in $X$;
 \item $X$ has the $\kappa$-discrete $n$-cells property for every $n < \omega$.
\end{enumerate}
\end{thm}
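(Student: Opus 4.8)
The plan is to read conditions (1)--(4) as the intrinsic description of $X$ being an \emph{absorbing manifold} for the class $\mathcal{C}$ of strongly countable-dimensional, $\sigma$-locally compact metrizable spaces of density at most $\kappa$, and then to extract the manifold structure from the uniqueness theory of absorbing sets, adapted to density $\kappa$. Throughout I regard $\ell_2^f(\kappa)$ as the canonical $\mathcal{C}$-absorbing set inside the Hilbert space $\ell_2(\kappa)$, the latter furnishing the ambient $\ell_2(\kappa)$-manifold structure.

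\textbf{Necessity.} For this direction it is enough to check that the model $\ell_2^f(\kappa)$ satisfies (1)--(4) and that each of these properties localizes, i.e.\ is inherited by open subspaces and can be reassembled over an open cover; since an $\ell_2^f(\kappa)$-manifold is by definition locally homeomorphic to $\ell_2^f(\kappa)$, the four properties then pass to $X$. On the model, (1) follows from two stratifications of $\ell_2^f(\kappa)$: by the size of the support (giving closed, finite-dimensional pieces, hence strong countable-dimensionality) and by the support size together with a uniform lower bound on the nonzero coordinates (giving locally compact pieces, hence $\sigma$-local compactness), while $\ell_2^f(\kappa)$ is convex and therefore an AR of density $\kappa$. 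Properties (2)--(4) are the defining absorbing and general-position features of $\ell_2^f(\kappa)$, the abundance of orthonormal directions providing both the strong universality and the room to disperse $\kappa$ many cells into a discrete family. The one genuine point is the localization of (2) and (4): both are approximation statements, and promoting a locally valid approximation to a global one over a $\kappa$-sized open cover requires a controlled patching argument based on paracompactness and a subordinated partition of unity.

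\textbf{Sufficiency.} This is the substantial direction, which I would route through absorbing-set theory. Using (1), I first realize $X$ as a homotopy-dense subset of an $\ell_2(\kappa)$-manifold $M$, the \emph{completion} of $X$: being an ANR of density $\kappa$, $X$ admits such an ambient manifold in which its complement is homotopy negligible. The heart of the matter is to verify that this copy of $X$ is a $\mathcal{C}$-absorbing set in $M$. Here (1) gives directly that $X$ is a countable union of closed members of $\mathcal{C}$; condition (3) exhibits the finite-dimensional compact pieces as strong $Z$-sets, so that $X$ is a $Z_\sigma$-set in $M$; and conditions (2) and (4) are what I would combine to obtain the absorption. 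Note that (2) only asserts universality against the \emph{finite-dimensional compacta} in $\mathcal{C}$, so a bootstrapping lemma---using the strong $Z$-set structure (3) together with the $\kappa$-discrete $n$-cells property (4)---is needed to promote this seed to full strong $\mathcal{C}$-universality; it is precisely (4) that supplies the simultaneous, discrete general position that the non-separable absorption requires. Once $X$ is known to be $\mathcal{C}$-absorbing in $M$, the uniqueness theorem for $\mathcal{C}$-absorbing sets identifies the pair $(M,X)$ with the canonical model pair, whose absorbing set is locally $\ell_2^f(\kappa)$; since $M$ is locally $\ell_2(\kappa)$, this realizes $X$ as an $\ell_2^f(\kappa)$-manifold.

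\textbf{Main obstacle.} I expect the decisive difficulty to lie in the non-separable absorbing-set machinery underpinning the last two steps: the bootstrapping from finite-dimensional universality to strong $\mathcal{C}$-universality, and the uniqueness theorem for $\mathcal{C}$-absorbing sets. In the separable case these rest on countable back-and-forth procedures in the spirit of Bestvina--Mogilski; for uncountable $\kappa$ they must be recast discretely, and the $\kappa$-discrete $n$-cells property is exactly the hypothesis that makes the required general-position adjustments possible simultaneously across $\kappa$ many cells. Carrying out this transfinite, discrete absorption---and, on the necessity side, the parallel localization of (2) and (4)---is where essentially all of the work resides.
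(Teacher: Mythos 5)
First, a point of comparison: the paper itself contains no proof of this statement. Theorem~\ref{DCP-char.} is imported verbatim from the author's earlier work \cite{Kos1} (``The author \cite{Kos1} gave the following characterization\dots''), so the ``paper's own proof'' is an external citation, and your attempt can only be judged on its own merits. On those merits there is a genuine gap: what you have written is a roadmap whose two load-bearing steps are exactly the content of the theorem, and neither is supplied nor available off the shelf. Specifically, (i) the ``bootstrapping lemma'' promoting strong universality for finite-dimensional compacta to strong $\mathcal{C}$-universality for your class $\mathcal{C}$ of strongly countable-dimensional, $\sigma$-locally compact spaces of density at most $\kappa$, and (ii) the uniqueness theorem for $\mathcal{C}$-absorbing sets in $\ell_2(\kappa)$-manifolds, are both nontrivial precisely because $\mathcal{C}$ contains non-separable members while condition (4) provides general position only for $\kappa$-indexed discrete families of finite-dimensional cells; the passage from such cell families to absorption of arbitrary density-$\kappa$ members of $\mathcal{C}$ is a substantial transfinite construction, not a quotable consequence of Bestvina--Mogilski theory, whose back-and-forth arguments are genuinely countable. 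Your closing paragraph concedes that ``essentially all of the work resides'' there, which is an accurate self-assessment: the proposal reduces the theorem to two unproven lemmas that are in effect the theorem itself (they are, in substance, what \cite{Kos1}, building on the non-separable techniques of \cite{BZ}, establishes).

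Two further steps would fail as stated. In the sufficiency direction, realizing $X$ as a homotopy-dense subset of an $\ell_2(\kappa)$-\emph{manifold} completion $M$ does not follow from (1) alone: a completion of an ANR of density $\kappa$ is merely a completely metrizable ANR, and to make $M$ an $\ell_2(\kappa)$-manifold you must verify Toru\'nczyk's non-separable characterization (including the discrete approximation property) and local density $\kappa$ at every point --- the latter is not part of (1), which only gives global density $\kappa$, and must first be extracted from condition (4). In the necessity direction, your claim that (2) and (4) ``localize'' over an open cover is asserted, not argued; strong universality is not obviously a local property, and establishing that it is (for ANRs, in the non-separable setting) is again part of the machinery of \cite{Kos1} rather than a routine partition-of-unity patching. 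So the overall architecture you sketch is a plausible reconstruction of how such characterizations are proved, but as a proof it is incomplete at every point where the non-separable difficulty actually lives.
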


In the above theorem, replacing ``ANR'' with ``AR'', we have a characterization of the model space $\ell_2^f(\kappa)$.
Given a simplicial complex $K$, we denote the polyhedron\footnote{In this paper, polyhedra are not needed to be metrizable.} of $K$ by $|K|$ and the $n$-skeleton of $K$ by $K^{(n)}$ for each $n < \omega$.
Regarding $\sigma \in K$ as a simplicial complex consisting of its faces, we write $\sigma^{(n)}$ as the union of $i$-faces of $\sigma$, $i \leq n$.
The next two lemmas are used in the proof of Theorem~E of \cite{Cu1}.

\begin{lem}\label{subd.1}
Let $Y = (Y,\rho)$ be a metric space, $K$ a simplicial complex and $f : |K| \to Y$ a map.
For each map $\alpha : Y \to (0,\infty)$, there exists a subdivision $K'$ of $K$ such that $\diam_\rho{f(\sigma)} < \inf_{x \in \sigma} \alpha f(x)$ for all $\sigma \in K'$.
\end{lem}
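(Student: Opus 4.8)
The plan is to reduce the statement to the standard fact that every open cover of a polyhedron admits a subdivision whose simplices each lie in a single member of the cover, applied to a cover of $|K|$ tailored to $f$ and $\alpha$. The starting observation is that the desired inequality is monotone under passing to subsets: for a set $U \subset |K|$ with $\diam_\rho f(U) < \inf_{x \in U} \alpha f(x)$ and any simplex $\sigma \subset U$ one has $\diam_\rho f(\sigma) \le \diam_\rho f(U) < \inf_{x \in U} \alpha f(x) \le \inf_{x \in \sigma} \alpha f(x)$. Hence it suffices to produce a subdivision $K'$ each of whose simplices is contained in a set $U$ of this kind.

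Next I would build an open cover of $|K|$ out of such sets. Fix $x \in |K|$ and put $c = \alpha f(x) > 0$. Since $\alpha f$ is continuous there is an open neighbourhood $W$ of $x$ on which $\alpha f > \tfrac12 c$, and since $f$ is continuous there is an open neighbourhood $W'$ of $x$ with $\diam_\rho f(W') < \tfrac12 c$. Setting $U_x = W \cap W'$ gives $\diam_\rho f(U_x) < \tfrac12 c \le \inf_{y \in U_x} \alpha f(y)$, so $\mathcal{U} = \{U_x \mid x \in |K|\}$ is an open cover of $|K|$ all of whose members have the property required above.

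It then remains to find a subdivision $K'$ of $K$ such that every $\sigma \in K'$ lies in some $U_x$; combined with the reduction this yields the lemma. For this I would invoke, and if necessary prove, the general subdivision theorem: for every open cover of the polyhedron of a simplicial complex there is a subdivision refining it. The natural argument is induction on the skeleta. Each $\tau \in K$ is compact, so $\mathcal{U}$ restricted to $\tau$ has a Lebesgue number, and finitely many iterated barycentric subdivisions $\Sd^{k(\tau)}$ make every subsimplex of $\tau$ fall inside a member of $\mathcal{U}$; these local choices are then assembled into a global subdivision extending the one already fixed on $K^{(n-1)}$.

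The main obstacle is precisely this assembly in the non-locally-finite case, where $|K|$ need not be metrizable and the required rates $k(\tau)$ may be unbounded over the infinitely many simplices sharing a common face, so one cannot subdivide to a uniform fineness. I would handle it by grading the subdivision along the face poset, arranging the rates to be monotone so that the subdivision induced on each face is the same no matter through which coface it is computed, and by exploiting that the property to be preserved is membership in a cover element rather than a uniform geometric mesh: the coarse radial simplices produced near an already-covered face map into the same member of $\mathcal{U}$ as that face and therefore cause no trouble. This consistency across shared faces of an infinite complex is the one genuinely technical point; the remainder is the short verification recorded above.
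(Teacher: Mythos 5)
Your proposal is correct and follows essentially the same route as the paper: build an open cover along which the oscillation of $f$ is small compared with the local values of $\alpha f$, then invoke the theorem that every open cover of a polyhedron is refined by some subdivision --- which the paper simply cites as Theorem~4.7.11 of \cite{Sa6}, so your tentative skeleton-induction proof of that theorem (the only genuinely shaky part of your write-up) can be replaced by the citation. The sole cosmetic difference is that the paper constructs its cover downstairs on $Y$, using balls $B_\rho(y,\delta(y))$ with $\delta(y) \leq \alpha(y)/4$ chosen so that $\alpha \geq \alpha(y)/2$ on each ball, and pulls it back through $f^{-1}$, whereas you construct the corresponding cover directly on $|K|$.
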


\begin{proof}
Let $\alpha : Y \to (0,\infty)$ be a map.
By the continuity of $\alpha$, we can find $0 < \delta(y) \leq \alpha(y)/4$ for each $y \in Y$ such that for every $y' \in B_\rho(y,\delta(y))$, $\alpha(y') \geq \alpha(y)/2$.
It follows from \cite[Theorem~4.7.11]{Sa6}, there is a subdivision $K'$ of $K$ that refines the open cover $\{f^{-1}(B_\rho(y,\delta(y))) \mid y \in Y\}$.
To show that $K'$ is the desired subdivision, take any $\sigma \in K'$.
Then we can choose $y \in Y$ so that $f(\sigma) \subset B_\rho(y,\delta(y))$,
 and hence $\diam_\rho f(\sigma) < 2\delta(y) \leq \alpha(y)/2$.
Observe that for each $x \in \sigma$, $\alpha f(x) \geq \alpha(y)/2$,
 which implies that $\diam_\rho{f(\sigma)} < \inf_{x \in \sigma} \alpha f(x)$.
Thus the proof is complete.
\end{proof}

\begin{lem}\label{subd.2}
For each map $\alpha : |K| \to (0,\infty)$ of the polyhedron of a simplicial complex $K$ and $\beta > 1$, there is a subdivision $K'$ of $K$ such that $\sup_{x \in \sigma} \alpha(x) < \beta\inf_{x \in \sigma} \alpha(x)$ for any $\sigma \in K'$.
\end{lem}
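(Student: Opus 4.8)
The plan is to reduce this statement to Lemma~\ref{subd.1} by passing to logarithms, which converts the multiplicative control $\sup_\sigma \alpha < \beta\inf_\sigma \alpha$ into an additive control on the oscillation of $\log\alpha$. First I would set $g = \log\alpha : |K| \to \R$; since $\alpha$ is continuous and takes values in $(0,\infty)$, the composite $g$ is a well-defined map into $\R$ equipped with its usual metric $\rho$. Because $\beta > 1$ we have $\log\beta > 0$, so the constant function $c : \R \to (0,\infty)$ with $c \equiv \log\beta$ is an admissible target map for the role of ``$\alpha$'' in Lemma~\ref{subd.1}.

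Next I would apply Lemma~\ref{subd.1} with the metric space $Y = (\R,\rho)$, the map $f = g$, and the constant map $c$ in place of $\alpha$. This yields a subdivision $K'$ of $K$ such that
$$\diam_\rho g(\sigma) < \inf_{x \in \sigma} c(g(x)) = \log\beta \quad \text{for every } \sigma \in K'.$$
It then remains to translate this back. Each closed simplex $\sigma$ is compact, so $g(\sigma)$ is a compact subset of $\R$ and hence $\diam_\rho g(\sigma) = \sup_{x\in\sigma} g(x) - \inf_{x\in\sigma} g(x)$. Thus the displayed inequality reads $\sup_{x\in\sigma}\log\alpha(x) - \inf_{x\in\sigma}\log\alpha(x) < \log\beta$, and exponentiating—using that $\log$ is strictly increasing, so it commutes with both $\sup$ and $\inf$—gives $\sup_{x\in\sigma}\alpha(x) < \beta\inf_{x\in\sigma}\alpha(x)$, exactly as required.

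I do not expect a serious obstacle here: the only genuine idea is recognizing the logarithmic reduction, after which everything follows formally from Lemma~\ref{subd.1}. The single point that deserves a line of care is the identification $\diam_\rho g(\sigma) = \sup_\sigma g - \inf_\sigma g$, which relies on the compactness of each closed simplex $\sigma$ so that the supremum and infimum of $g$ on $\sigma$ are attained and bound the diameter exactly; for a one-dimensional image this replacement of diameter by the gap between extremes is immediate, and it is the hinge that makes the exponentiation step reversible.
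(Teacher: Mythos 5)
Your proof is correct, but it takes a genuinely different route from the paper. The paper proves the lemma directly: for each $x \in |K|$ it chooses a neighborhood $U(x)$ on which $\alpha$ varies by less than $(\beta - 1)\alpha(x)/(\beta + 1)$, refines the cover $\{U(x)\}$ by a subdivision (via the same Theorem~4.7.11 of \cite{Sa6} that underlies Lemma~\ref{subd.1}), and then runs an explicit two-step estimate $\alpha(y) < 2\beta\alpha(x)/(\beta+1) < \beta\alpha(z)$ to pass from additive control around the cover point $x$ to the multiplicative bound. Your logarithmic reduction instead treats Lemma~\ref{subd.2} as a formal corollary of Lemma~\ref{subd.1}, applied to $f = \log\alpha : |K| \to \R$ and the constant function $\log\beta$; this is shorter, avoids all juggling with the constant $(\beta-1)/(\beta+1)$, and makes transparent that the multiplicative statement is just the additive one read through $\exp$. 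What the paper's direct argument buys is self-containedness (it never needs $\log\alpha$ or the translation back and forth) at the cost of slightly more computation. Every step of yours checks out: $\log\beta > 0$ makes the constant map admissible in Lemma~\ref{subd.1}; and your one flagged point is even easier than you suggest, since for \emph{any} non-empty bounded subset $S \subset \R$ one has $\diam_\rho S = \sup S - \inf S$ without invoking compactness (boundedness of $g(\sigma)$ follows from compactness of $\sigma$, which also guarantees $\inf_\sigma \alpha > 0$ so the final multiplication is legitimate).
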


\begin{proof}
For each $x \in |K|$, we can choose an open neighborhood $U(x)$ of $x$ in $|K|$ so that if $y \in U(x)$,
 then $|\alpha(x) - \alpha(y)| < (\beta - 1)\alpha(x)/(\beta + 1)$.
Then $\mathcal{U} = \{U(x) \mid x \in |K|\}$ is an open cover of $|K|$.
According to Theorem~4.7.11 of \cite{Sa6}, there is a subdivision $K'$ of $K$ that refines $\mathcal{U}$.
Take any simplex $\sigma \in K'$ and any point $y \in \sigma$.
By the compactness of $\sigma$, we can find $z \in \sigma$ such that $\alpha(z) = \inf_{z' \in \sigma} \alpha(z')$.
Since $K'$ refines $\mathcal{U}$,
 there exists a point $x \in |K|$ such that $\sigma \subset U(x)$.
Then $|\alpha(x) - \alpha(y)| < (\beta - 1)\alpha(x)/(\beta + 1)$ and $|\alpha(x) - \alpha(z)| < (\beta - 1)\alpha(x)/(\beta + 1)$.
Observe that $\alpha(y) < 2\beta\alpha(x)/(\beta + 1) < \beta\alpha(z)$,
 which implies that $\sup_{z' \in \sigma} \alpha(z') < \beta\inf_{z' \in \sigma} \alpha(z')$.
Hence $K'$ is the desired subdivision.
\end{proof}

\section{Basic properties of $\fin(X)$}

In this section, we list some basic properties of the hyperspace $\fin(X)$.
According to Proposition~5.3 of \cite{MSY}, we have the following:

\begin{prop}\label{Borel}
A space $X$ is strongly countable-dimensional, $\sigma$-locally compact and of density $\kappa$ if and only if so is $\fin(X)$.
\end{prop}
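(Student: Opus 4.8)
The plan is to stratify the hyperspace by cardinality. For each $n<\omega$ put $\fin_n(X)=\{A\in\fin(X)\mid \card A\le n\}$, so that $\fin(X)=\bigcup_{n\ge 1}\fin_n(X)$, and I would first note that each $\fin_n(X)$ is closed in $\fin(X)$ (a Hausdorff-metric limit of sets of at most $n$ points again has at most $n$ points). The bridge to $X$ is the natural surjection $q_n\colon X^n\to\fin_n(X)$, $q_n(x_1,\dots,x_n)=\{x_1,\dots,x_n\}$, which is continuous for $d_H$. The first real step is to prove that $q_n$ is \emph{perfect}: its fibres are finite (the fibre over $\{a_1,\dots,a_m\}$ consists of the tuples whose entries cover exactly that set), and it is a closed map. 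For closedness I would take a closed $F\subseteq X^n$ and a sequence $q_n(p_j)\to A=\{a_1,\dots,a_m\}$ with $p_j\in F$; after passing to a subsequence the nearest-point assignment $l\mapsto\iota(l)\in\{1,\dots,m\}$ of coordinates of $p_j$ to points of $A$ becomes constant, and by the two halves of Hausdorff convergence it is surjective, which forces $p_j\to(a_{\iota(1)},\dots,a_{\iota(n)})\in F$, a point whose image is $A$.

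For the forward implication I would transfer the three properties from $X$ to $X^n$: a finite product of spaces of density $\kappa$ has density $\kappa$; a finite product of $\sigma$-locally compact spaces is $\sigma$-locally compact (a finite product of locally compact spaces is locally compact); and if $X=\bigcup_k F_k$ is a decomposition into finite-dimensional closed sets then $X^n$ is strongly countable-dimensional since $\dimn(F_{k_1}\times\cdots\times F_{k_n})\le\sum_i\dimn F_{k_i}<\infty$. I would then push these along $q_n$. Density passes to continuous images, so $\dens\fin_n(X)\le\kappa$. For the other two I use the observation that a metrizable space is $\sigma$-locally compact exactly when it is a countable union of \emph{closed} locally compact subspaces; writing $X^n=\bigcup_k M_k$ with $M_k$ closed and locally compact, each restriction $q_n|_{M_k}\colon M_k\to q_n(M_k)$ is again perfect (the restriction of a closed map to a closed set is closed onto its image, with compact fibres), so $q_n(M_k)$ is locally compact and $\fin_n(X)=\bigcup_k q_n(M_k)$ is $\sigma$-locally compact. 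Likewise, decomposing $X^n=\bigcup_k F_k$ into closed finite-dimensional sets, each $q_n(F_k)$ is closed in $\fin(X)$ and, being the image of the finite-dimensional $F_k$ under a closed map with fibres of at most $n^n$ points, is finite-dimensional by Hurewicz's theorem on dimension-raising maps; hence $\fin_n(X)$ is strongly countable-dimensional. Since every $\fin_n(X)$ is closed in $\fin(X)$, taking the countable union over $n$ yields all three properties for $\fin(X)$, the density being exactly $\kappa$ because $x\mapsto\{x\}$ embeds $X$ as a closed subspace.

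The converse is the easy direction: $\fin_1(X)$ is a copy of $X$ that is closed in $\fin(X)$, and each of the three properties is inherited by closed subspaces. If $\fin(X)=\bigcup_k L_k$ with $L_k$ locally compact, then $X=\bigcup_k(L_k\cap X)$ with $L_k\cap X$ closed in the locally compact $L_k$, hence locally compact; if $\fin(X)=\bigcup_k F_k$ with $F_k$ closed and finite-dimensional, then $X=\bigcup_k(F_k\cap X)$ with $F_k\cap X$ closed and finite-dimensional by monotonicity of covering dimension in metrizable spaces; and $X$, as a subspace, has density at most $\dens\fin(X)$, equality following from the density computation of the forward direction.

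I expect the main obstacle to be the perfectness of $q_n$, specifically the closedness argument, which must be carried out without any compactness of $X$, together with the finite-to-one closed-map dimension estimate; the reformulation of $\sigma$-local compactness in terms of \emph{closed} locally compact pieces is the device that keeps the restrictions $q_n|_{M_k}$ perfect, so that these preservation theorems apply.
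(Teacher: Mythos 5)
Your proposal is correct. Note, however, that the paper does not actually prove this proposition: it is quoted verbatim from Proposition~5.3 of the Mine--Sakai--Yaguchi paper \cite{MSY}, so there is no in-paper argument to compare against; what you have written is a self-contained reconstruction along the standard lines of that cited proof. The key ingredients all check out: the stratification $\fin(X)=\bigcup_{n}\fin_n(X)$ by closed subsets, perfectness of $q_n\colon X^n\to\fin_n(X)$ (your sequential closedness argument via the eventually constant nearest-point assignment is sound, since points of the limit set $A$ are uniformly separated), preservation of local compactness under perfect maps, the Hurewicz dimension-raising theorem for closed maps with uniformly bounded finite fibres (valid for arbitrary, not just separable, metrizable spaces), and Morita's product inequality $\dimn(F_{k_1}\times\cdots\times F_{k_n})\le\sum_i\dimn F_{k_i}$ in the general metrizable setting. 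The one step you assert without proof --- that $\sigma$-local compactness is equivalent to being a countable union of \emph{closed} locally compact subspaces --- does hold and deserves a line: a locally compact subspace $L$ of a metric space is open in its closure, hence locally closed, hence $L=C\cap U$ with $C$ closed and $U$ open; writing the open set $U$ as a countable union of closed sets $U_k$ exhibits $L=\bigcup_k(C\cap U_k)$ as a countable union of sets closed in $X$, each closed in $L$ and therefore locally compact.
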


\begin{prop}\label{nbd.dens.}
Let $X$ be a space.
For each point $x \in X$, if every neighborhood of $\{x\}$ in $\fin(X)$ is of density $\kappa$,
 then so is any neighborhood of $x$ in $X$.
\end{prop}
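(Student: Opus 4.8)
The plan is to use the natural identification of a neighborhood $U$ of $x$ in $X$ with the hyperspace $\fin(U)$ sitting inside $\fin(X)$, together with the fact that forming finite subsets does not alter density. Fix a compatible metric $d$ on $X$ and let $d_H$ be the induced Hausdorff metric, which by the Preliminaries generates the Vietoris topology of $\fin(X)$.

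First I would record the key elementary computation: for any $A \in \fin(X)$ one has $d_H(\{x\},A) = \max_{a \in A} d(x,a)$, so that $d_H(\{x\},A) < \epsilon$ precisely when $A \subseteq B_d(x,\epsilon)$. Hence $B_{d_H}(\{x\},\epsilon) = \fin(B_d(x,\epsilon))$: a basic ball about the singleton $\{x\}$ is literally the hyperspace of the corresponding ball about $x$. Given any neighborhood $U$ of $x$ in $X$, I would choose $\epsilon > 0$ with $B_d(x,\epsilon) \subseteq U$; then $\fin(U) = \{A \in \fin(X) \mid A \subseteq U\}$ contains $B_{d_H}(\{x\},\epsilon)$ and is therefore a neighborhood of $\{x\}$ in $\fin(X)$. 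I would also note that, as a subspace of $\fin(X)$, $\fin(U)$ coincides with the hyperspace of the metric space $U$, since the Hausdorff distance between finite subsets of $U$ depends only on point-distances lying in $U$.

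With these observations the argument closes quickly. Applying the hypothesis to the neighborhood $\fin(U)$ of $\{x\}$ gives $\dens(\fin(U)) = \kappa$. It then remains to transfer this to $U$ through the density equality $\dens(\fin(Z)) = \dens(Z)$, valid for every metrizable space $Z$: the singleton embedding $Z \hookrightarrow \fin(Z)$ is isometric, so $\dens(Z) \le \dens(\fin(Z))$, while if $D \subseteq Z$ is dense of cardinality $\dens(Z)$ then the finite subsets of $D$ are dense in $\fin(Z)$ and have the same (infinite) cardinality, giving $\dens(\fin(Z)) \le \dens(Z)$. Taking $Z = U$ yields $\dens(U) = \dens(\fin(U)) = \kappa$, as required. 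This density equality is elementary and underlies the density clause of Proposition \ref{Borel}.

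The hard part is really only the second paragraph: one must be sure that $\fin(U)$ is a genuine neighborhood of $\{x\}$ and that it carries exactly the hyperspace topology of $U$. The explicit Hausdorff-ball identity $B_{d_H}(\{x\},\epsilon) = \fin(B_d(x,\epsilon))$ is what settles both points cleanly; after that, the density bookkeeping is entirely routine.
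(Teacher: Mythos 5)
Your proof is correct and follows essentially the same route as the paper: you identify $\fin(U)$ as a neighborhood of $\{x\}$ in $\fin(X)$ and then transfer density via the equality $\dens(\fin(U))=\dens(U)$, which is exactly what the paper does (citing Proposition~\ref{Borel} for the density clause and running the argument by contradiction rather than directly). Your explicit verification that $B_{d_H}(\{x\},\epsilon)=\fin(B_d(x,\epsilon))$ simply fills in the step the paper dismisses with ``as is easily observed.''
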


\begin{proof}
Since $\fin(X)$ is of density $\kappa$,
 so is $X$ due to Proposition~\ref{Borel}.
Let $x \in X$ be a point such that any neighborhood of $\{x\}$ in $\fin(X)$ is of density $\kappa$.
Assume that there is a neighborhood $U$ of $x$ whose density $< \kappa$.
As is easily observed,
 $\fin(U)$ is a neighborhood of $\{x\}$ in $\fin(X)$.
Applying Proposition~\ref{Borel} again, we have that the density of $\fin(U)$ is less than $\kappa$,
 which is a contradiction.
Consequently, every neighborhood of $x$ is of density $\kappa$.
\end{proof}

Combining Lemmas~2.3 and 3.6 with the proof of Theorem~2.4 in \cite{CN} (cf.~\cite[Proposition~3.1]{Yag}), we can establish the following:

\begin{prop}\label{ar}
A space $X$ is locally path-connected (connected and locally path-connected) if and only if $\fin(X)$ is an ANR (AR).
\end{prop}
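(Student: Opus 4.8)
The plan is to read the statement as two biconditionals and, since an absolute retract is precisely a contractible ANR in the metrizable category, to treat the ANR equivalence as the core and then add the contractibility and connectedness bookkeeping to obtain the AR equivalence. Throughout I would exploit the closed embedding $X \hookrightarrow \fin(X)$, $x \mapsto \{x\}$, together with the fact that $\fin(X)$ carries a continuous, commutative, idempotent binary operation $(A,B)\mapsto A\cup B$; this semilattice structure is what ultimately furnishes the local contractions needed for an ANR, and it is the feature that makes the hyperspace better behaved than a bare closed subspace.

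For the forward ANR direction, suppose $X$ is locally path-connected and fix $A=\{a_1,\dots,a_n\}\in\fin(X)$. Using local path-connectedness I would choose pairwise disjoint path-connected open sets $U_i\ni a_i$; then the basic Vietoris neighbourhood $\langle U_1,\dots,U_n\rangle=\{F: F\subset\bigcup_i U_i,\ F\cap U_i\neq\emptyset\ \forall i\}$ of $A$ is carried homeomorphically onto $\prod_{i=1}^n \fin(U_i)$ by $F\mapsto(F\cap U_1,\dots,F\cap U_n)$, with inverse $(F_1,\dots,F_n)\mapsto\bigcup_i F_i$. Hence it suffices to prove that $\fin(U)$ is an AR whenever $U$ is path-connected and locally path-connected, for then every point of $\fin(X)$ has an ANR neighbourhood and Hanner's local criterion yields that $\fin(X)$ is an ANR.

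The heart of the argument, and the step I expect to be the main obstacle, is thus the direct verification that $\fin(U)$ is an absolute retract for connected, locally path-connected $U$, i.e.\ that every map $f\colon C\to\fin(U)$ from a closed subset $C$ of a metric space $Z$ extends over $Z$. Following the scheme in the proof of Theorem~2.4 of \cite{CN}, I would take a Dugundji system on $Z\setminus C$, namely a locally finite partition of unity $\{\varphi_j\}$ subordinated to points $c_j\in C$, and define the extension by amalgamating the finite sets $f(c_j)$ via the union operation, interpolating the individual points along paths in $U$ supplied by local path-connectedness. To make this assignment continuous up to $C$ one must control the diameters of the cells on which the interpolating paths are built: here Lemmas~\ref{subd.1} and~\ref{subd.2}, applied to a triangulation of the nerve, let me pass to a subdivision on which the relevant mesh is as small as a prescribed gauge $\alpha$ on the target demands, exactly as Lemmas~2.3 and~3.6 of \cite{CN} are used. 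Contractibility of $\fin(U)$ is the special case $Z=\fin(U)\times\I$, $C=\fin(U)\times\{0,1\}$, so this construction simultaneously delivers the AR conclusion for path-connected $U$.

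For the converses, if $\fin(X)$ is an ANR then it is locally path-connected, and by restricting attention to neighbourhoods of singletons one recovers local path-connectedness of $X$ at each point as in \cite{CN}; this transfer is the delicate point of the reverse implication, precisely because a closed subspace of an ANR need not itself be locally path-connected. Finally, to pass between connectedness of $X$ and of $\fin(X)$ I would use that for clopen $A\subset X$ the set $\langle A\rangle=\{F:F\subset A\}$ is clopen in $\fin(X)$, so that any clopen partition of $X$ induces one of $\fin(X)$; hence $\fin(X)$ connected forces $X$ connected, and an absolute retract is connected. Combining the clopen argument, the ANR equivalence, and the identification of AR with contractible ANR then yields the AR biconditional and completes the proof.
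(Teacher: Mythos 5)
Your proposal is sound and ultimately rests on the same engine as the paper, but it is packaged genuinely differently, so a comparison is worth making. The paper offers no argument of its own for this proposition: it asserts that Lemmas~2.3 and 3.6 of \cite{CN}, combined with the proof of Theorem~2.4 there (cf.\ Yaguchi), carry over to the non-separable setting; the engine behind that citation is the nerve/partial-realization technique, in which maps are extended skeleton-by-skeleton over a suitably subdivided complex using the union operation, small arcs supplied by local path-connectedness, and diameter control---exactly the machinery this paper re-implements later in Proposition~\ref{str.Z_sigma}. You reach that engine by a different decomposition: the homeomorphism $\langle U_1,\dots,U_n\rangle\cong\prod_{i}\fin(U_i)$ for pairwise disjoint path-connected $U_i$, together with Hanner's local criterion, reduces the ANR biconditional to the AR statement for connected, locally path-connected pieces, which you then attack by a Dugundji-system extension; the converse directions are handled by the \cite{CN} transfer for local path-connectedness and by a clopen-set argument for connectedness. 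This localization step does not appear in the route the paper points to (the Curtis--Nhu scheme treats the locally path-connected case directly, with no reduction to connected pieces), and what it buys is a clean separation of the point-set bookkeeping from the single hard construction; what it costs is that the hard construction itself remains, in your write-up as in the paper's citation, delegated to the \cite{CN} scheme rather than carried out. Two reparable inaccuracies: first, Lemmas~\ref{subd.1} and~\ref{subd.2} control the diameters of a map \emph{already defined} on a complex, so in the Dugundji setting the required smallness near $C$ cannot come from subdividing the nerve but must come from the Dugundji-system estimates together with a uniformized arc gauge as in Lemma~\ref{arc}; second, for a clopen partition $X=A\sqcup B$ the sets $\langle A\rangle$ and $\langle B\rangle$ do not cover $\fin(X)$---the induced clopen partition is $\langle A\rangle$ together with $\{F\in\fin(X)\mid F\cap B\neq\emptyset\}$.
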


\section{The strong universality of $\fin(X)$}

This section is devoted to verifying the strong universality of $\fin(X)$ for the class of  finite-dimensional compact metrizable spaces.
The following lemma follows from Lemma~2.2 of \cite{CN}.

\begin{lem}\label{Peano}
Let $X$ be a space.
If $\mathcal{A} \subset \fin(X)$ is locally connected and compact,
 then so is the union $\bigcup \mathcal{A} \subset X$.
\end{lem}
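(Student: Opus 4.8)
The plan is to establish the two conclusions separately: compactness of $\bigcup\mathcal{A}$ is routine, while local connectedness is the real content. For compactness I would argue sequentially, since $X$ is metrizable. Given a sequence $(x_n)$ in $\bigcup\mathcal{A}$, pick $A_n\in\mathcal{A}$ with $x_n\in A_n$; compactness of $\mathcal{A}$ yields a subsequence with $A_n\to A\in\mathcal{A}$ in the Hausdorff metric $d_H$. Then $d(x_n,A)\le d_H(A_n,A)\to 0$, and since $A$ is finite a further subsequence of $(x_n)$ converges to a point of $A\subset\bigcup\mathcal{A}$. Hence $\bigcup\mathcal{A}$ is compact, and in particular closed in $X$.

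For local connectedness I would first reduce to a connected, one-parameter situation. Being compact and locally connected, $\mathcal{A}$ has open components, so by compactness only finitely many, say $\mathcal{A}_1,\dots,\mathcal{A}_m$, each a Peano continuum. Since $\bigcup\mathcal{A}=\bigcup_{i=1}^m\bigcup\mathcal{A}_i$ is a finite union of closed sets, and a finite union of closed, locally connected, compact subsets of a metric space is again locally connected and compact, it suffices to treat a single $\bigcup\mathcal{A}_i$. Fix $i$ and, by the Hahn--Mazurkiewicz theorem, choose a continuous surjection $f\colon\I\to\mathcal{A}_i$. Form the graph $G=\{(t,x)\in\I\times X\mid x\in f(t)\}$ with second projection $p\colon G\to X$, so that $p(G)=\bigcup\mathcal{A}_i$. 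As in the compactness argument, $G$ is closed in $\I\times X$ and contained in the compact set $\I\times\bigcup\mathcal{A}_i$, hence compact.

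The heart of the proof, and the step I expect to be the main obstacle, is to show that $G$ is locally connected. Granting this, $G$ is a compact locally connected space, so it has finitely many components, each a Peano continuum; each is a continuous image of $\I$, hence so is its $p$-image, and $\bigcup\mathcal{A}_i=p(G)$ is a finite union of Peano continua, therefore locally connected and compact, completing the reduction. To prove $G$ locally connected I would fix $(t_0,x_0)\in G$ and $\epsilon>0$ with $\epsilon$ less than half the least distance between distinct points of the finite set $f(t_0)$, and examine $W=G\cap(J\times B_d(x_0,\epsilon))$ for a small interval $J\ni t_0$. The key analytic fact is that finiteness together with $d_H$-convergence forces genuine convergence of points: for $t$ near $t_0$, any point of $f(t)$ lying in $B_d(x_0,\epsilon)$ must be within $d_H(f(t),f(t_0))$ of the unique point $x_0$ of $f(t_0)$ in that ball. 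This keeps $\diam W$ small, and it also drives the connectedness of $W$: if $W$ split into two relatively clopen pieces separating $(t_0,x_0)$, then using the linear order on $J$ and the upper and lower semicontinuity of $t\mapsto f(t)$ (points of $f(t_0)$ are approximated by points of $f(t)$ and vice versa) one can follow a sheet back toward $t_0$ and produce a point in the closures of both pieces, a contradiction. Thus $(t_0,x_0)$ has arbitrarily small connected neighborhoods in $G$.

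The delicate point throughout is precisely the connectedness of the local pieces $W$: unlike for $\mathcal{A}$ itself, two points of $\bigcup\mathcal{A}$ being close in $X$ does not make the finite sets containing them close in $d_H$, so local connectedness of the union cannot be read off directly from that of $\mathcal{A}$. Routing everything through the graph $G$ over the interval $\I$ is what lets the linear order organize the semicontinuity argument, and I expect the careful verification of the clopen-separation contradiction, keeping track of how the sheets of $f$ merge and split as $t\to t_0$, to be where the real work lies.
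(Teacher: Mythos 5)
Your proof is correct, but note that the paper itself contains no argument for this lemma: it simply asserts that the statement follows from Lemma~2.2 of \cite{CN}, so your blind proof is a genuinely different, self-contained route rather than a reproduction of anything in the text. Your reductions are all sound: the subsequence/pigeonhole argument for compactness, the splitting of $\mathcal{A}$ into finitely many Peano components, the standard fact that a finite union of compact locally connected sets is locally connected, and the passage through a Hahn--Mazurkiewicz surjection $f\colon\I\to\mathcal{A}_i$ and its graph $G$. The step you flag as the heart does complete, along exactly the lines you indicate. With $\epsilon$ below half the separation of $f(t_0)$ and $J$ so small that $\sup_{t\in J}d_H(f(t),f(t_0))<\epsilon/3$, your ``key analytic fact'' shows every point of $G\cap(J\times B_d(x_0,\epsilon))$ has $X$-coordinate within $\epsilon/3$ of $x_0$; it is then cleanest to run the separation argument over a closed parameter interval, where $W_1=G\cap([t_0,t_1]\times B_d(x_0,\epsilon))$ is compact. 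If $W_1=P\sqcup Q$ is a clopen partition with $(t_0,x_0)\in Q$ and $P\neq\emptyset$, the projection of $P$ to $[t_0,t_1]$ is compact and misses $t_0$ (the fiber over $t_0$ is just $\{(t_0,x_0)\}$), so it has a minimum $s>t_0$; all fibers over $[t_0,s)$ are non-empty (by lower semicontinuity of $f$) and lie in $Q$, and lower semicontinuity at $s$ produces points of $Q$ converging to any $(s,p)\in P$, contradicting disjointness of the closed sets $P$ and $Q$. Unions of these connected compacta through $(t_0,x_0)$ give arbitrarily small connected neighborhoods, whence $G$ is locally connected. As for what each route buys: the paper's citation is economical, while your argument is self-contained and makes visible exactly where finiteness of the members of $\mathcal{A}$ enters --- for $\cpt(X)$ the analogous statement is false, since a single non-locally-connected compactum already gives a counterexample.
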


Using Curtis and Nhu's result \cite{CN}, we shall show the strong universality of a non-separable hyperspace $\fin(X)$.

\begin{prop}\label{str.univ.}
Let $X$ be a non-degenerate, connected, locally path-connected space.
Then $\fin(X)$ is strongly universal for the class of finite-dimensional compact metrizable spaces.
\end{prop}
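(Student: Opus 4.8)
The plan is to reduce the problem to the separable case, where the strong universality of $\fin(\cdot)$ is available from Curtis and Nhu \cite{CN}, and then to transport the resulting $Z$-embedding back into the ambient hyperspace $\fin(X)$. Fix once and for all a finite-dimensional compact metrizable space $A$, a closed set $B \subset A$, an open cover $\mathcal{U}$ of $\fin(X)$, and a map $f : A \to \fin(X)$ with $f|_B$ a $Z$-embedding; we seek a $Z$-embedding $g : A \to \fin(X)$ that is $\mathcal{U}$-close to $f$ and satisfies $g|_B = f|_B$.

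First I would localize the data to a separable piece of $X$. Since $A$ is compact, the image $f(A)$ is a compact, hence separable, subset of $\fin(X)$; choosing a countable dense family $\{F_n\}$ in $f(A)$ and using that every $F \in f(A)$ is a Hausdorff-metric limit of the $F_n$, one sees that the union $K = \bigcup f(A) \subset X$ lies in the separable set $\overline{\bigcup_n F_n}$, so $K$ is separable (compare the compactness half of Lemma~\ref{Peano}, whose local-connectedness hypothesis is not needed for this weaker conclusion). Next I would enclose $K$ in a separable, non-degenerate, connected, locally path-connected subspace $X_0 \subset X$. Because $X$ is connected and locally path-connected, a routine closing-off construction produces such an $X_0$: starting from a countable dense subset of $K$, one repeatedly adjoins arcs joining the chosen points inside small path-connected neighborhoods, together with countable dense subsets of those neighborhoods, and takes the union over countably many stages; the limit inherits connectedness and local path-connectedness, remains separable, and, being a union of $K$ with countably many arcs, may be taken $\sigma$-compact so that whichever form of the Curtis--Nhu strong universality one cites applies. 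Since $K \subset X_0$ we have $f(A) \subset \fin(X_0)$, and the trace of $\mathcal{U}$ on $\fin(X_0)$ is an open cover $\mathcal{U}_0$ for which $\mathcal{U}_0$-closeness in $\fin(X_0)$ entails $\mathcal{U}$-closeness in $\fin(X)$. Applying the strong universality of $\fin(X_0)$ from \cite{CN} then yields a $Z$-embedding $g : A \to \fin(X_0)$ that is $\mathcal{U}_0$-close to $f$ with $g|_B = f|_B$.

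The main obstacle is the $Z$-set bookkeeping, which splits into two points, precisely because being a $Z$-set is \emph{not} inherited under passage to a subspace (for instance $\{0\}$ is a $Z$-set in the plane but not in a line through it). On the input side, to invoke the strong universality of $\fin(X_0)$ I must know that $f|_B$ is a $Z$-embedding \emph{in} $\fin(X_0)$, which does not follow formally from its being a $Z$-embedding in $\fin(X)$. On the output side, I must upgrade $g$ from a $Z$-embedding in $\fin(X_0)$ to a $Z$-embedding in $\fin(X)$. Both issues are settled by one observation: $f(B) \cong B$ and $g(A) \cong A$ are \emph{finite-dimensional compact} subsets of the relevant hyperspace, so it suffices to establish the lemma that, for every non-degenerate, connected, locally path-connected space $Y$, each finite-dimensional compact subset of $\fin(Y)$ is a $Z$-set in $\fin(Y)$. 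Granting this for $Y = X_0$ supplies the hypothesis demanded by \cite{CN}, and granting it for $Y = X$ shows that $g(A)$ is a $Z$-set in $\fin(X)$; as $g$ is then an embedding whose image is a $Z$-set, which agrees with $f$ on $B$ and is $\mathcal{U}$-close to $f$, it is the desired $Z$-embedding.

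Accordingly, the real work is the displayed lemma on finite-dimensional compacta, and this is where I expect the difficulty to concentrate. I would prove it by a general-position argument exploiting the freedom to enlarge finite sets: given an open cover $\mathcal{V}$ of $\fin(Y)$ and a compactum $C \subset \fin(Y)$ with $m = \dimn C$, one pushes $\fin(Y)$ off $C$ by a map of the form $F \mapsto F \cup \phi(F)$, where $\phi$ adjoins to $F$ a small, $\mathcal{V}$-controlled cloud of nearby points chosen, by the non-degeneracy (whence $\dimn Y \geq 1$) and local path-connectedness of $Y$, in enough independent directions that a generic choice makes $F \cup \phi(F)$ avoid the $m$-dimensional set $C$. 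Rendering $\phi$ continuous and globally defined, via a locally finite partition of unity subordinate to $\mathcal{V}$ together with the ANR structure of $\fin(Y)$ furnished by Proposition~\ref{ar}, is the technical heart of the matter and the place where the standing hypotheses on $Y$ are genuinely used.
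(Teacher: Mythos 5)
Your high-level plan --- localize $f$ to a small piece of $X$, invoke Curtis--Nhu there, and transfer $Z$-set-ness in both directions via the principle that compacta in hyperspaces of finite sets are $Z$-sets --- is in the same spirit as the paper's proof, and your diagnosis of the two $Z$-set bookkeeping issues is exactly right. But the reduction step has a genuine gap: there is no citable Curtis--Nhu statement that applies to your $X_0$. The form of their result quoted in the paper (Theorem~\ref{l2f}) would require $X_0$ to be strongly countable-dimensional in addition to separable, connected, locally path-connected and $\sigma$-compact, and your $X_0$ cannot in general be made so, because it must contain $K=\bigcup f(A)$, and $K$ can be infinite-dimensional under the present hypotheses: the proposition does not assume anything about the dimension of $X$, and $f(A)$ is only a \emph{continuous image} of a finite-dimensional compactum, so it need not be finite-dimensional. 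Concretely, take $X=\I^\omega$, a Peano surjection $\gamma:\I\to\I^\omega$, a point $p$, and set $A=\I$, $B=\emptyset$, $f(t)=\{p,\gamma(t)\}$; then $K\supset\I^\omega$, so no subspace containing $K$ is strongly countable-dimensional, and by the ``if'' part of Theorem~\ref{l2f} no admissible $X_0$ has $\fin(X_0)$ homeomorphic to $\ell_2^f(\omega)$. What Curtis--Nhu actually supply, and what the paper uses, is dimension-free but differently shaped: their Lemma~4.6 gives strong universality not of $\fin(X_0)$ but of $\overline{\mathcal{A}}=\bigl\{F\in\fin\bigl(\bigcup\mathcal{A}\bigr)\bigm| F\text{ contains some member of }\mathcal{A}\bigr\}$, where $\mathcal{A}=\tilde f(\I^n)$ is the image of an extension of $f$ over a cube (available because $\fin(X)$ is an AR by Proposition~\ref{ar}) and $\bigcup\mathcal{A}$ is compact and locally connected by Lemma~\ref{Peano}. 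So the paper localizes to a \emph{compactum} via the AR property rather than to a separable subspace; your route is circular unless the separable case of the very proposition being proved were already on record, which it is not in the form you need.

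The second gap is your treatment of the key lemma that finite-dimensional compacta in $\fin(Y)$ are $Z$-sets. The lemma is true and citable --- it is Lemma~3.8 of Curtis--Nhu, which the paper invokes at precisely this point to conclude that $g(A)$ is a $Z$-set in $\fin(X)$, and in strengthened form it is Proposition~\ref{cpt.str.Z} --- but your general-position sketch is not a viable proof, and the finite-dimensionality of $C$ is a red herring. Over a bare metric space $Y$ there are no ``independent directions'', and no genericity count on the added cloud can work, because even a $0$-dimensional compactum in $\fin(Y)$ may contain finite sets of unbounded cardinality: with $F_n=\{\,j/n^{2}\mid j=0,1,\dots,n\,\}$ one has $d_H(F_n,\{0\})=1/n$, so $C=\{\{0\}\}\cup\{F_n\mid n\geq 1\}$ is a compact $0$-dimensional subset of $\fin(\I)$ whose members have arbitrarily many points. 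The mechanism that actually works is cardinality plus a union lemma: show each stratum $\fin^k(Y)$ is a \emph{strong} $Z$-set by adjoining to each $F$ a set of $k+1$ points at controlled, pairwise-separated distances so that having more than $k$ points persists under limits (this is where connectedness and local path-connectedness of $Y$ enter, via Lemmas~\ref{sph.} and \ref{arc}); then write $C=\bigcup_{k<\omega}\bigl(C\cap\fin^k(Y)\bigr)$ and apply Lemma~\ref{str.Z}, which needs strong $Z$-sets --- plain $Z$-sets are not known to be closed under such countable unions. This is exactly Propositions~\ref{str.Z_sigma} and \ref{cpt.str.Z} of the paper. If you replace your sketch by a citation of Curtis--Nhu Lemma~3.8 (for both $Y=X_0$ and $Y=X$) and repair the localization as above, your argument essentially collapses into the paper's proof.
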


\begin{proof}
Let $A$ be a finite-dimensional compact metrizable space, $B$ a closed set in $A$, $f : A \to \fin(X)$ a map such that $f|_B$ is an $Z$-embedding.
We show that for any open cover $\mathcal{U}$ of $\fin(X)$, there is a $Z$-embedding $g : A \to \fin(X)$ such that $g$ is $\mathcal{U}$-close to $f$ and $g|_B = f|_B$.
Since $A$ is finite-dimensional and compact,
 we can regard it as a closed subset of $\I^n$ for some $n < \omega$.
The space $X$ is connected and locally path-connected,
 and hence $\fin(X)$ is an AR by Proposition~\ref{ar}.
Therefore the map $f$ can be extended to a map $\tilde{f} : \I^n \to \fin(X)$.
Note that $\mathcal{A} = \tilde{f}(\I^n)$ is connected, locally connected and compact.
According to Lemma~\ref{Peano}, the union $\bigcup \mathcal{A} \subset X$ is also locally connected and compact,
 and hence it is locally path-connected and has finitely many components.
Note that each component of $\bigcup \mathcal{A}$ is open and closed.
We may assume that at least one component of $\bigcup \mathcal{A}$ is non-degenerate because $X$ has no isolated points and is locally path-connected.
Let
 $$\overline{\mathcal{A}} = \Big\{F \in \fin(\bigcup \mathcal{A}) \Bigm| F \text{ contains some element of } \mathcal{A}\Big\}.$$
As is easily observed,
 the space $\overline{\mathcal{A}}$ is separable and for each $F \in \fin(\bigcup \mathcal{A})$, $F \in \overline{\mathcal{A}}$ if $F$ contains some element of $\overline{\mathcal{A}}$.

We show that any $F \in \overline{\mathcal{A}}$ meets each component of $\bigcup \mathcal{A}$.
It is sufficient to prove that every $F \in \mathcal{A}$ intersects each component of $\bigcup \mathcal{A}$.
Suppose the contrary,
 so there are $F \in \mathcal{A}$ and a component $C$ of $\bigcup \mathcal{A}$ such that $F \cap C = \emptyset$.
Then $C$ and $\bigcup \mathcal{A} \setminus C$ are open and $F \subset \bigcup \mathcal{A} \setminus C$.
Observe that $\{F' \in \mathcal{A} \mid F' \cap C \neq \emptyset\}$ and $\{F' \in \mathcal{A} \mid F' \subset \bigcup \mathcal{A} \setminus C\}$ are non-empty disjoint open sets in $\mathcal{A}$.
Moreover,
 $$\mathcal{A} = \Big\{F' \in \mathcal{A} \Bigm| F' \cap C \neq \emptyset\Big\} \cup \Big\{F' \in \mathcal{A} \Bigm| F' \subset \bigcup \mathcal{A} \setminus C\Big\}.$$
This contradicts to the connectedness of $\mathcal{A}$.
Hence any $F \in \overline{\mathcal{A}}$ meets all components of $\bigcup \mathcal{A}$.
Applying Lemma~4.6 of \cite{CN}, we have that $\overline{\mathcal{A}}$ is strongly universal for the class of finite-dimensional compact metrizable spaces.
Therefore there exists an embedding $g : A \to \overline{\mathcal{A}} \subset \fin(X)$ such that $g$ is $\mathcal{U}$-close to $f$ and $g|_B = f|_B$.
By Lemma~3.8 of \cite{CN}, the compact image $g(A)$ is a $Z$-set in $\fin(X)$.
The proof is completed.
\end{proof}

\section{The strong $Z$-set property of compact sets in $\fin(X)$}

In this section, we will discuss the strong $Z$-set property of compact subsets of $\fin(X)$.

\begin{lem}\label{sph.}
If a metric space $X = (X,d)$ is non-degenerate and connected,
 then for each $x \in X$ and $0 < \epsilon < \diam_d{X}/2$, there exists a point $y \in X$ such that $d(x,y) = \epsilon$.
\end{lem}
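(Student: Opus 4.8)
The plan is to reduce everything to the intermediate value property by studying the single continuous function $g : X \to \R$ defined by $g(y) = d(x,y)$. First I would note that $g$ is continuous (indeed $1$-Lipschitz, by the triangle inequality), so since $X$ is connected, the image $g(X)$ is a connected subset of $\R$ and hence an interval. Moreover $g(x) = 0$, so $0 \in g(X)$. The whole task is then to show that $g$ also takes some value strictly larger than $\epsilon$, for then the interval $g(X)$ must contain everything between $0$ and that value, in particular $\epsilon$.

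To produce such a value I would use the diameter hypothesis. Since $\epsilon < \diam_d{X}/2$, we have $2\epsilon < \diam_d{X}$, so by the definition of $\diam_d{X}$ as a supremum there exist points $a, b \in X$ with $d(a,b) > 2\epsilon$. (This is where non-degeneracy is implicitly used, as it guarantees $\diam_d{X} > 0$; the argument is also valid when $\diam_d{X} = \infty$.) Splitting this distance across $x$ via the triangle inequality $d(a,b) \le d(a,x) + d(x,b)$ forces $\max\{d(x,a), d(x,b)\} > \epsilon$. Letting $z$ be whichever of $a, b$ achieves this maximum, we obtain $g(z) > \epsilon$.

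Finally, since $g(X)$ is an interval containing both $0$ and $g(z) > \epsilon$, it contains the entire segment $[0, g(z)]$, and in particular $\epsilon \in g(X)$; any $y \in X$ with $g(y) = d(x,y) = \epsilon$ is then the desired point. I do not expect a genuine obstacle here: the only step requiring care is the production of a point at distance strictly greater than $\epsilon$, and that is precisely where the factor of $2$ in the hypothesis $\epsilon < \diam_d{X}/2$ is consumed by the triangle inequality. Everything else is the standard fact that connected subsets of $\R$ are intervals.
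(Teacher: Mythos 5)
Your proposal is correct and is in substance the same argument as the paper's: the paper supposes no point at distance $\epsilon$ exists and exhibits the resulting separation of $X$ into the disjoint non-empty open sets $B_d(x,\epsilon)$ and $X \setminus \overline{B_d}(x,\epsilon)$, which is precisely the separation underlying your intermediate-value argument for $g(y) = d(x,y)$. If anything, your write-up is more explicit at the one delicate point: the paper leaves tacit the verification that $X \setminus \overline{B_d}(x,\epsilon) \neq \emptyset$, which is exactly where the hypothesis $\epsilon < \diam_d{X}/2$ is consumed, and which you spell out via the triangle inequality applied to a pair $a,b$ with $d(a,b) > 2\epsilon$.
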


\begin{proof}
Suppose the contrary.
Then $X$ can be separated by disjoint non-empty open subsets $B_d(x,\epsilon)$ and $X \setminus \overline{B_d}(x,\epsilon)$,
 which contradicts to the connectedness of $X$.
The proof is complete.
\end{proof}

Let $\cpt(X)$ be the hyperspace of non-empty compact sets in a space $X$ with the Vietoris topology.
Note that $\fin(X) \subset \cpt(X)$.
The analogues of the following two lemmas for $\cpt(X)$ are used in the proof of Theorem~E of \cite{Cu1}.

\begin{lem}\label{subseq.}
Let $X = (X,d)$ be a metric space.
Suppose that $\{A_n\}_{n < \omega}$ is a sequence in $\fin(X) = (\fin(X),d_H)$ converging to $A \in \fin(X)$.
Then for each $B_n \subset A_n$, $\{B_n\}_{n < \omega}$ has a subsequence converging to some $B \subset A$.
\end{lem}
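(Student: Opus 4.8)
The plan is to exploit the finiteness of the limit set $A$ to reduce the Hausdorff convergence to a combinatorial pigeonhole argument. Write $A = \{a_1,\dots,a_k\}$ and choose $\delta > 0$ smaller than half the minimum of $d(a_i,a_j)$ over distinct indices; this minimum is positive precisely because $A$ is finite. For every $n$ with $d_H(A_n,A) < \delta$, the two defining inclusions of the Hausdorff metric say that each point of $A_n$ lies within $\delta$ of $A$, while each $a_i$ lies within $\delta$ of $A_n$. Since distinct points of $A$ are more than $2\delta$ apart, the first inclusion lets me define a map $\phi_n : A_n \to A$ sending each $x \in A_n$ to the unique element of $A$ within $\delta$ of it, and the second inclusion makes $\phi_n$ surjective.

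First I would pass to a subsequence on which the ``shadow'' of $B_n$ in $A$ is constant. For each admissible $n$ set $S_n = \phi_n(B_n) \subset A$; since $B_n$ is non-empty, so is $S_n$. As $A$ has only finitely many subsets, some non-empty $B \subset A$ satisfies $S_n = B$ for infinitely many $n$, and I restrict attention to this subsequence, relabelling it as $\{B_n\}_{n<\omega}$ for convenience. The candidate limit is this $B$.

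It then remains to verify that $B_n \to B$ in $d_H$ along the subsequence. Put $\epsilon_n = d_H(A_n,A)$, so that $\epsilon_n \to 0$ and, for large $n$, $d(x,\phi_n(x)) \le \epsilon_n$ for every $x \in A_n$ (the nearest point of $A$ is exactly $\phi_n(x)$, and $d(x,A) \le \epsilon_n$). For the inclusion $B_n \subset N_d(B,r)$: every $x \in B_n$ has $\phi_n(x) \in S_n = B$ with $d(x,\phi_n(x)) \le \epsilon_n$, so $d(x,B) \le \epsilon_n$. For the reverse inclusion $B \subset N_d(B_n,r)$: each $a \in B = \phi_n(B_n)$ equals $\phi_n(x)$ for some $x \in B_n$, whence $d(a,B_n) \le d(a,x) \le \epsilon_n$. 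Together these give $d_H(B_n,B) \le \epsilon_n \to 0$, as desired.

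The main obstacle is the first paragraph: making the nearest-point map $\phi_n$ well defined and surjective. Everything downstream is routine once the separation constant $\delta$ isolates the points of $A$ and guarantees that, for large $n$, each point of $A_n$ is unambiguously attached to a single point of $A$; the pigeonhole selection of $B$ and the two Hausdorff estimates then follow mechanically.
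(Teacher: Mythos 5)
Your proof is correct, but it takes a genuinely different and more elementary route than the paper's. The paper argues via compactness: it invokes van Mill's lemma to see that $\tilde{A} = A \cup \bigcup_{n<\omega} A_n$ is compact, uses compactness of the hyperspace $\cpt(\tilde{A})$ to extract a subsequence of $\{B_n\}_{n<\omega}$ converging to some compact $B$, and then applies the standard description of hyperspace limits (as sets of limits of selections $b_{n_i} \in B_{n_i}$) to conclude $B \subset A$. You instead exploit the finiteness of $A$ head-on: the separation constant $\delta$ makes the nearest-point projection $\phi_n \colon A_n \to A$ well defined for all large $n$, the pigeonhole principle applied to the finitely many non-empty subsets of $A$ fixes the candidate limit $B = \phi_n(B_n)$ along a subsequence, and the two inclusions of the Hausdorff metric give the quantitative bound $d_H(B_n,B) \leq d_H(A_n,A)$ on that subsequence. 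Each approach buys something: the paper's argument never uses finiteness, only compactness, so it applies verbatim to the analogous statement for $\cpt(X)$ (which the paper explicitly alludes to just before the lemma, citing Curtis); yours is self-contained, avoids the hyperspace-compactness machinery and the limit formula, identifies the limit $B$ concretely as the common ``shadow'' of the $B_n$ in $A$, and yields an explicit rate of convergence inherited from $\{A_n\}_{n<\omega}$. One pedantic point: when $A$ is a singleton the minimum over distinct pairs of points of $A$ is vacuous, so $\delta$ should there be taken arbitrary; your argument then goes through trivially.
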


\begin{proof}
According to Lemma~1.11.2.~(3)\footnote{This holds without the assumption that $X$ is separable.} of \cite{Mil3}, $\tilde{A} = A \cup \bigcup_{n < \omega} A_n$ is compact.
Hence the hyperspace $\cpt(\tilde{A}) = (\cpt(\tilde{A}),(d|_{\tilde{A} \times \tilde{A}})_H)$ is compact, see \cite[Theorem~5.12.5.~(3)]{Sa6},
 which implies that $\{B_n\}_{n < \omega}$ has a subsequence $\{B_{n_i}\}_{i < \omega}$ converging to some $B \in \cpt(\tilde{A})$.
By Lemma~1.11.2.~(2)\footnotemark[4] of \cite{Mil3}, we have
\begin{align*}
 B &= \{x \in X \mid \text{for each } i < \omega, \text{ there is } b_{n_i} \in B_{n_i} \text{ such that } \lim_{i \to \infty} b_{n_i} = x\}\\
 &\subset \{x \in X \mid \text{for each } i < \omega, \text{ there is } a_{n_i} \in A_{n_i} \text{ such that } \lim_{i \to \infty} a_{n_i} = x\} = A.
\end{align*}
Thus the proof is complete.
\end{proof}

\begin{lem}\label{arc}
Let $X = (X,d)$ be a metric space and $\alpha : \fin(X) \to (0,\infty)$ be a map.
If $X$ is locally path-connected,
 then there exists a map $\beta : \fin(X) \to (0,\infty)$ such that for any $A \in \fin(X)$, each point $x \in \overline{N_d}(A,\beta(A))$ has an arc $\gamma : \I \to X$ from some point of $A$ to $x$ of $\diam_d{\gamma(\I)} < \alpha(A)$.
\end{lem}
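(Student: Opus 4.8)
The plan is to reduce everything to a single scalar condition and then upgrade a local, non-uniform radius into a globally continuous one. For $A \in \fin(X)$ and $r > 0$, say that $A$ has \emph{property} $P_r$ if for every $a \in A$ and every $x \in \overline{B_d}(a,r)$ there is a path $\gamma : \I \to X$ with $\gamma(0) = a$, $\gamma(1) = x$ and $\diam_d \gamma(\I) < \alpha(A)$. This condition is monotone in $r$ (if $A$ has $P_r$ and $0 < r' \le r$, then $A$ has $P_{r'}$), and since $A$ is finite, for any $x$ with $d(x,A) \le r$ there is $a \in A$ with $d(x,a) = d(x,A) \le r$. Hence once we produce a continuous $\beta : \fin(X) \to (0,\infty)$ for which $A$ has $P_{\beta(A)}$ for every $A$, the conclusion follows at once (replacing the path by an arc inside its image when an honest arc is wanted, which does not increase the diameter).

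First I would record the pointwise input from local path-connectedness in a form that is automatically uniform on a neighborhood. Fix $A = \{a_1,\dots,a_k\}$. Using that $X$ is locally path-connected, for each $j$ choose a path-connected open set $V_j \ni a_j$ with $\diam_d V_j < \alpha(A)/2$, and then $\rho_j > 0$ with $B_d(a_j, 2\rho_j) \subset V_j$. By continuity of $\alpha$ there is $\eta > 0$ with $\alpha(A') > \alpha(A)/2$ whenever $d_H(A,A') < \eta$. Put $s_A = \tfrac12 \min\{\eta, \rho_1, \dots, \rho_k\} > 0$. The key point is that a single set $V_j$ connects \emph{every} pair of points lying inside it: if $A'$ satisfies $d_H(A,A') < s_A$, $a' \in A'$, and $x \in \overline{B_d}(a', s_A)$, then some $a_j$ has $d(a',a_j) < s_A \le \rho_j$, so $a' \in V_j$ and $d(x,a_j) \le 2 s_A \le \rho_j < 2\rho_j$, whence $x \in V_j$ as well; a path from $a'$ to $x$ inside the path-connected set $V_j$ has diameter $\le \diam_d V_j < \alpha(A)/2 < \alpha(A')$. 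Thus every $A'$ in the neighborhood $\mathcal{W}_A = \{A' \mid d_H(A,A') < s_A\}$ has property $P_{s_A}$ (with respect to its own value $\alpha(A')$).

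Finally I would globalize by a partition of unity. The family $\{\mathcal{W}_A\}_{A \in \fin(X)}$ is an open cover of the metrizable, hence paracompact, space $\fin(X)$; choose a locally finite partition of unity $\{\phi_\lambda\}_{\lambda \in \Lambda}$ with $\operatorname{supp}\phi_\lambda \subset \mathcal{W}_{A_\lambda}$ and set $\beta(B) = \big(\sum_\lambda \phi_\lambda(B)/s_{A_\lambda}\big)^{-1}$. By local finiteness the denominator is a locally finite sum of continuous functions bounded away from $0$, so $\beta : \fin(X) \to (0,\infty)$ is continuous and positive, and the harmonic-mean form guarantees $\beta(B) \le s_{A_\lambda}$ for every $\lambda$ with $\phi_\lambda(B) > 0$. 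For such $\lambda$ we have $B \in \mathcal{W}_{A_\lambda}$, so $B$ has $P_{s_{A_\lambda}}$ by the previous step, and by monotonicity $B$ has $P_{\beta(B)}$. This is exactly the property of $\beta$ asserted by the lemma.

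I expect the main obstacle to be the continuity of $\beta$: the naive radius furnished by local path-connectedness depends discontinuously on the base point, so the crux is the observation in the second paragraph that one fixed path-connected neighborhood simultaneously joins all nearby pairs. This converts the pointwise data into a genuinely uniform positive lower bound on a neighborhood in $\fin(X)$, after which the partition-of-unity smoothing is routine.
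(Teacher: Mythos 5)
Your overall strategy is sound and genuinely different in implementation from the paper's. The paper defines $\xi(A)$ as the supremum of admissible radii, proves directly that $\xi$ is lower semicontinuous (by concatenating a short arc from a point $b$ of a nearby set $B$ to a point $a \in A$ with an arc from $a$ to $x$), and then invokes a selection theorem (\cite[Theorem~2.7.6]{Sa6}, inserting a continuous function below a positive lower semicontinuous one) to obtain $\beta$. You instead prove a local uniformity statement --- a single radius $s_A$ works for \emph{every} $A'$ in a $d_H$-ball $\mathcal{W}_A$ around $A$, via the observation that one path-connected open set $V_j$ contains both $a'$ and $x$, so no concatenation of paths is needed --- and then glue with a partition of unity. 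Your second paragraph is correct, and this local-uniformity trick is arguably tidier than the paper's arc-joining argument; the partition of unity replaces the appeal to the insertion theorem.

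However, your final step rests on a false claim: the weighted harmonic mean $\beta(B) = \bigl(\sum_\lambda \phi_\lambda(B)/s_{A_\lambda}\bigr)^{-1}$ does \emph{not} satisfy $\beta(B) \le s_{A_\lambda}$ for every $\lambda$ with $\phi_\lambda(B) > 0$. A weighted harmonic mean lies between the minimum and the maximum of the values being averaged, not below the minimum: with two indices, $s_{A_1} = 1$, $s_{A_2} = 2$ and $\phi_1(B) = \phi_2(B) = 1/2$, one gets $\beta(B) = 4/3 > s_{A_1}$. Fortunately, your argument only needs the true inequality $\beta(B) \le \max\{s_{A_\lambda} \mid \phi_\lambda(B) > 0\}$; this maximum is attained, say at $\lambda^*$, because local finiteness makes the set of active indices at $B$ finite. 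Since $\phi_{\lambda^*}(B) > 0$ forces $B \in \operatorname{supp}\phi_{\lambda^*} \subset \mathcal{W}_{A_{\lambda^*}}$, the set $B$ has property $P_{s_{A_{\lambda^*}}}$ by your second paragraph, and monotonicity of $P_r$ together with $\beta(B) \le s_{A_{\lambda^*}}$ yields $P_{\beta(B)}$. So the proof is repaired by replacing ``for every $\lambda$ with $\phi_\lambda(B) > 0$'' with ``for the $\lambda$ maximizing $s_{A_\lambda}$ among those with $\phi_\lambda(B) > 0$''; as written, the stated justification is incorrect even though the conclusion it is used for is salvageable.
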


\begin{proof}
For each $A \in \fin(X)$, let
 $$\Xi(A) = \left\{
  \begin{array}{l|l}
  \eta > 0 &\left.
  \begin{array}{ll}
  \text{there exists } 0 < \epsilon < \alpha(A) \text{ such that for any } a \in A \text{ and } x \in \overline{B_d}(a,\eta),\\
  \text{there is an arc from } a \text{ to } x \text{ of diameter } < \epsilon
  \end{array}
  \right.
  \end{array}\right\}$$
 and $\xi(A) = \sup\Xi(A)$.
Note that $\Xi(A) \neq \emptyset$ for all $A \in \fin(X)$.
Indeed, let $0 < \epsilon < \alpha(A)$.
Since $X$ is locally path-connected,
 and hence locally arcwise-connected \cite[Corollary~5.14.7]{Sa6},
 for each $a \in A$, there exists $\eta(a) > 0$ such that for any $x \in \overline{B_d}(a,\eta(a))$, $a$ and $x$ are connected by an arc of diameter $< \epsilon$.
Then $\eta = \min_{a \in A}\eta(a) \in \Xi(A)$.
By the definition, $\xi(A) \leq \alpha(A)$. 

We shall show that $\xi$ is lower semi-continuous.
Take any $t \in (0,\infty)$ and any $A \in \xi^{-1}((t,\infty))$.
Then we can choose $t < \eta \leq \xi(A)$ so that there is $0 < \epsilon < \alpha(A)$ such that for any $a \in A$ and any $x \in \overline{B_d}(a,\eta)$, $a$ and $x$ are connected by an arc of diameter $< \epsilon$.
Since $X$ is locally arcwise-connected,
 there exists $\delta_1 > 0$ such that any $a \in A$ and any $x \in \overline{B_d}(a,\delta_1)$ are connected by an arc of diameter $< (\alpha(A) - \epsilon)/2$.
By the continuity of $\alpha$, we can find $\delta_2 > 0$ such that for each $B \in B_{d_H}(A,\delta_2)$, $|\alpha(A) - \alpha(B)| < (\alpha(A) - \epsilon)/2$.
Let $\delta = \min\{\delta_1, \delta_2, (\eta - t)/2\}$ and $B \in B_{d_H}(A,\delta)$.
Observe that $(\alpha(A) + \epsilon)/2 < \alpha(B)$.
Fix any $b \in B$ and any $x \in \overline{B_d}(b,(\eta + t)/2)$.
Since $d_H(A,B) < \delta$,
 we can take $a \in A$ such that $d(a,b) < \delta \leq \delta_1$,
 and hence there exists an arc $\gamma_1$ from $b$ to $a$ of diameter $< (\alpha(A) - \epsilon)/2$.
On the other hand,
 $$d(a,x) \leq d(a,b) + d(b,x) < \delta + (\eta + t)/2 \leq (\eta - t)/2 + (\eta + t)/2 = \eta,$$
 which implies that there is an arc $\gamma_2$ from $a$ to $x$ of diameter $< \epsilon$.
Joining these arcs $\gamma_1$ and $\gamma_2$, we can obtain an arc from $b$ to $x$ of diameter $< (\alpha(A) - \epsilon)/2 + \epsilon$,
 that is less than $\alpha(B)$.
Hence $t < (\eta + t)/2 \leq \xi(B)$,
 which means that $\xi$ is lower semi-continuous.

According to Theorem~2.7.6 of \cite{Sa6}, we can find a map $\beta : \fin(X) \to (0,\infty)$ such that $0 < \beta(A) < \xi(A)$ for all $A \in \fin(X)$,
 that is the desired map.
\end{proof}

The next lemma is useful to detect a strong $Z$-set in an ANR.

\begin{lem}[Lemma~7.2 of \cite{Cu4}]\label{str.Z}
Let $A$ be a topologically complete, closed subset of an ANR $Y$.
If $A$ is a countable union of strong $Z$-sets in $Y$,
 then it is a strong $Z$-set.
\end{lem}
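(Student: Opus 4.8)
The plan is to prove $A$ is a strong $Z$-set by an infinite-composition argument, with topological completeness used precisely to guarantee that the \emph{closure} of the limit image avoids \emph{all} of $A$. First I would record the convenient reformulation that a closed set $A$ is a strong $Z$-set if and only if for every open cover $\mathcal{U}$ of $Y$ there exist an open set $V \supseteq A$ and a map $f : Y \to Y$ that is $\mathcal{U}$-close to $\id_Y$ with $f(Y) \cap V = \emptyset$ (take $V = Y \setminus \cl f(Y)$). Next I would reduce to an increasing sequence: writing $A = \bigcup_n A_n$ and replacing $A_n$ by $A_1 \cup \cdots \cup A_n$, it suffices to know that a finite union of strong $Z$-sets is again a strong $Z$-set, which follows from the two-step push-off (push off $A_1$, then push off $A_2$ by a perturbation fine enough near the closed set $A_1$ to preserve the first avoidance). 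Thus we may assume $A_1 \subseteq A_2 \subseteq \cdots$, each a closed strong $Z$-set, with $A = \bigcup_n A_n$ closed.

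Fix an open cover $\mathcal{U}$ and a metric $d$ on the ANR $Y$. Since $A$ is topologically complete it is an absolute $G_\delta$, so I would fix a complete metric $\rho$ on $A$ together with a decreasing sequence of open sets $W_1 \supseteq W_2 \supseteq \cdots$ in $Y$ with $\bigcap_n W_n = A$ forming a complete sequence of neighborhoods. I would then construct inductively maps $h_n : Y \to Y$ with $h_0 = \id_Y$ and $h_n = g_n \circ h_{n-1}$, where $g_n$ is supplied by the strong $Z$-set property of $A_n$ and expels the current image from an open neighborhood of $A_n$. Fixing successive star-refinements $\mathcal{U}_1 = \mathcal{U}, \mathcal{U}_2, \ldots$, with each $\mathcal{U}_{n+1}$ star-refining $\mathcal{U}_n$, I would take the perturbation $g_n$ small enough that (i) the accumulated displacement keeps $h_n$ $\mathcal{U}$-close to the identity, (ii) $\cl h_n(Y)$ stays disjoint from $A_1, \ldots, A_{n-1}$, which is possible because each $A_i$ is closed and $h_{n-1}(Y)$ already avoids an open neighborhood of it, and (iii) the sequence $(h_n)$ is uniformly Cauchy.

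Passing to the limit $f = \lim_n h_n$ yields a continuous map that is $\mathcal{U}$-close to $\id_Y$, convergence being guaranteed by choosing the displacements in (iii) summably against the $\mathcal{U}_n$. That $f(Y)$ misses $A$ is then immediate, since each stage removes the image from a neighborhood of $A_n$; this already exhibits $A$ as a $Z$-set. The real content is to upgrade this to $\cl f(Y) \cap A = \emptyset$, that is, to forbid $f(Y)$ from accumulating onto $A$. Here topological completeness is indispensable: using $\rho$ (equivalently, the complete sequence $\{W_n\}$), I would arrange the push-off at stage $n$ to move the image a $\rho$-definite amount away, so that a hypothetical sequence $y_k$ with $f(y_k) \to a \in A$ would force the trajectories $h_n(y_k)$ to trace a $\rho$-Cauchy sequence converging to $a$, contradicting the fact that from some stage on the image was expelled from a fixed neighborhood of the $A_n$ containing $a$. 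This produces the desired open $V \supseteq A$ disjoint from $f(Y)$.

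The main obstacle is exactly this upgrade from $Z$-set to strong $Z$-set in the limit. Avoiding each $A_n$ separately is routine, and a closed countable union of $Z$-sets is easily a $Z$-set; but without completeness the limiting image can cling to $A$, missing every $A_n$ yet accumulating on $A$, so a closed countable union of strong $Z$-sets need not be strong. The entire difficulty is therefore the simultaneous calibration of the perturbation sizes, the star-refinements $\mathcal{U}_n$, and the neighborhoods $W_n$ against the complete metric $\rho$, so that the infinite composition converges while the limit image is kept a uniform distance off $A$. Once these bookkeeping choices are fixed, the verifications of $\mathcal{U}$-closeness, continuity of $f$, and the contradiction in the completeness step are all routine.
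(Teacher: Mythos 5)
A preliminary remark: the paper itself gives no proof of this statement---it is quoted verbatim as Lemma~7.2 of \cite{Cu4}---so your proposal has to stand on its own, and as written it has a genuine gap at the single most important step: the passage to the limit $f=\lim_n h_n$. You build $h_n=g_n\circ h_{n-1}$ and assert that convergence is ``guaranteed'' by choosing the displacements summably, i.e.\ by making $(h_n)$ uniformly Cauchy. But $Y$ is only an ANR; no completeness of $Y$ is assumed, and a uniformly Cauchy sequence of maps into a non-complete metrizable space need not converge pointwise anywhere---the limits exist only in the completion of $Y$, and nothing puts them back inside $Y$. Completeness of $A$ cannot repair this, because your trajectories $h_n(y)$ are pushed \emph{away} from $A$, into the part of $Y$ about which the hypotheses say nothing. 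The failure is not hypothetical: in the paper's own application the ambient space is $Y=\fin(X)$, which (like the model space $\ell_2^f(\kappa)$) is a countable union of closed nowhere dense sets $\fin^k(X)$, hence meager in itself and not topologically complete, so Cauchy trajectories will typically converge outside $Y$. Relatedly, your side claim that ``a closed countable union of $Z$-sets is easily a $Z$-set'' is false without completeness: $Y=A=\ell_2^f(\omega)$ is a closed countable union of finite-dimensional compacta, each a strong $Z$-set in $\ell_2^f(\omega)$ by condition (3) of Theorem~\ref{DCP-char.}, yet no non-empty space is a $Z$-set in itself. That example is precisely the convergence failure made concrete, and it shows completeness of $A$ is essential already for the plain $Z$-set conclusion.

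This points to the second problem: you have located the role of completeness in the wrong place. If one maintains your condition (ii) in the only workable way---after stage $n$ the image misses an open $V_n\supseteq A_n$, and all later perturbations are fine enough (via covers refining $\{V_n,\,Y\setminus\cl{V_n'}\}$ with $A_n\subseteq V_n'\subseteq\cl{V_n'}\subseteq V_n$) to keep every subsequent image out of the fixed open set $V_n'$---then the ``upgrade'' to strong $Z$-set is automatic once $f$ exists: $f(Y)$ misses the open set $\bigcup_n V_n'\supseteq A$, hence $\cl{f(Y)}\cap A=\emptyset$. So completeness is not what rescues strongness; it is what must make $f$ exist and be continuous. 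The missing idea is to make the infinite composition \emph{locally finite} rather than merely Cauchy: cut each $g_n$ off (using the homotopy form of the push-off in an ANR) so that it is supported in the $n$-th term $W_n$ of a decreasing sequence of neighborhoods of $A$ constructed, Sierpi\'nski-style, from the complete metric $\rho$ on $A$, so that every $d$-Cauchy sequence $y_n\in W_n$ converges to a point of $A$. Then each trajectory either escapes some $W_n$ and is never moved again---so $f$ is well defined and continuous with no completeness of $Y$ whatsoever---or it stays in all the $W_n$ and would have to converge to a point of $A$, which the $V_n'$-bookkeeping above forbids. This simultaneous calibration of supports, shrunken neighborhoods and the complete metric is the actual content of Curtis's lemma; it is absent from your outline, and without it the construction does not produce a map.
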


We denote the cardinality of a set $A$ by $\card{A}$.
For each $k < \omega$, let $\fin^k(X) = \{A \in \fin(X) \mid \card{A} \leq k\}$.
As is easily observed, $\fin^k(X)$ is closed in $\fin(X)$.

\begin{prop}\label{str.Z_sigma}
Suppose that $X$ is non-degenerate, connected and locally path-connected.
Then for each $k < \omega$, $\fin^k(X)$ is a strong $Z$-set in $\fin(X)$.
\end{prop}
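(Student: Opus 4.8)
The plan is to verify the strong $Z$-set property directly, by constructing, for an arbitrarily prescribed open cover $\mathcal U$ of $\fin(X)$, a map $f : \fin(X) \to \fin(X)$ which is $\mathcal U$-close to the identity, which sends nothing into $\fin^k(X)$, and whose image is in fact \emph{closed}; the last point is exactly what upgrades ``$Z$-set'' to ``strong $Z$-set''. First I would pass from $\mathcal U$ to a map $\alpha : \fin(X) \to (0,\infty)$ such that any self-map that is $d_H$-closer than $\alpha$ to the identity is automatically $\mathcal U$-close to it (standard for metrizable spaces; recall $\fin(X)$ is an AR by Proposition~\ref{ar}), and then feed $\alpha$ into Lemma~\ref{arc} to obtain the companion map $\beta : \fin(X) \to (0,\infty)$. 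The target is a map of the form $f(A) = A \cup P(A)$, where $P : \fin(X) \to \fin(X)$ is continuous, satisfies $\card{P(A)} \geq k+1$ for every $A$, and obeys $P(A) \subset N_d(A, \min\{\alpha(A), \beta(A)\})$. Since $A \subset f(A)$, this forces $d_H(f(A),A) < \alpha(A)$, so $f$ is $\mathcal U$-close to the identity, and $\card{f(A)} \geq k+1$, so $f(\fin(X)) \cap \fin^k(X) = \emptyset$.

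The ``strong'' half of the statement is then clean, and I would dispose of it first by means of Lemma~\ref{subseq.}. Let $B$ lie in the closure of $f(\fin(X))$, say $B = \lim_{n} f(A_n)$. Because $A_n \subset f(A_n)$ and $f(A_n) \to B$, Lemma~\ref{subseq.} produces a subsequence with $A_{n_i} \to A$ for some $A \subset B$ with $A \in \fin(X)$. Continuity of $f$ gives $f(A_{n_i}) \to f(A)$, while $f(A_{n_i}) \to B$ as a subsequence of a convergent sequence; hence $B = f(A)$ and $\card{B} = \card{f(A)} \geq k+1$. Thus $\cl(f(\fin(X))) = f(\fin(X))$ misses $\fin^k(X)$ altogether, so $\fin^k(X)$ is a strong $Z$-set as soon as $f$ is produced.

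Everything therefore reduces to the construction of $P$, which I expect to be the main obstacle, the difficulty being \emph{continuity}, not existence. For a \emph{single} set it is easy to manufacture $k+1$ nearby distinct points: since $X$ is non-degenerate and connected it has no isolated point, so Lemma~\ref{sph.} lets one place points at $k+1$ distinct small prescribed distances from a chosen point (distinct distances force distinct points), and local arcwise connectivity, as packaged in Lemma~\ref{arc}, keeps the resulting configuration within small $d_H$-distance of the original set. The trouble is to make this choice depend continuously on $A$, for there is in general no continuous selection of a point on a small sphere about a moving centre. I would resolve this with a locally finite partition of unity $\{\phi_\lambda\}$ subordinate to a cover of $\fin(X)$ that is fine relative to $\alpha$ and $\beta$: for each index $\lambda$ fix, once and for all, a family of $k+1$ short arcs running into $X$ (supplied by Lemma~\ref{sph.} together with local arcwise connectivity), and let the contribution of $\lambda$ to $P(A)$ be these arcs sampled at the parameter $\phi_\lambda(A)$, so that as $\phi_\lambda(A) \to 0$ the sampled points fade back along the arcs. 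The delicate point, and where Lemma~\ref{arc} does the real work, is to organise the arcs so that this fading is, near each locus where an index deactivates, a collapse \emph{into $A$ itself} — each vanishing point must converge to a point of $A$, not to an extraneous basepoint — which is precisely what guarantees continuity of $P$ across those loci; here the fineness of the cover relative to $\beta$ is what makes the sampled targets lie in $\overline{N_d}(A, \beta(A))$ and hence reachable by the short arcs of Lemma~\ref{arc}. Granting this, $P$ is continuous, the added points stay within $\alpha(A)$ of $A$ and remain distinct by the distinct prescribed distances of Lemma~\ref{sph.}, $\card{P(A)} \geq k+1$, and the proof closes via the argument of the second paragraph.
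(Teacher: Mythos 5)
Your reduction of the problem is sound in its outer layers: granting a continuous $f(A) = A \cup P(A)$ with $\card{P(A)} \geq k+1$ and $P(A) \subset N_d(A,\min\{\alpha(A),\beta(A)\})$, your second paragraph correctly combines Lemma~\ref{subseq.} with continuity of $f$ to conclude that $f(\fin(X))$ is closed and misses $\fin^k(X)$; this is a legitimate (even slightly cleaner) variant of the closing argument in the paper, which instead reaches a contradiction via a pigeonhole estimate on the mutual distances of the added points. The genuine gap is in the construction of $P$, which you yourself single out as the main obstacle and then do not overcome. You fix, for each index $\lambda$ of a partition of unity, a family of $k+1$ arcs \emph{once and for all}, sample them at the parameter $\phi_\lambda(A)$, and require that as $\phi_\lambda(A) \to 0$ the sampled points ``collapse into $A$ itself.'' But the arcs, and in particular their initial points, are fixed points of $X$, while $A$ ranges over the entire locus where $\phi_\lambda$ vanishes; two sets $A$, $A'$ on that locus can be disjoint, so no fixed initial points can lie in every such $A$. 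In the Hausdorff metric, a point that is to disappear from $A \cup P(A)$ exactly at the deactivation locus must, at that moment, already belong to the closed finite set that remains, i.e.\ to $A$ itself; so your requirement is precisely the continuous-selection problem (``no continuous selection of a point on a small sphere about a moving centre'') that you correctly declared unsolvable one sentence earlier. Lemma~\ref{arc} does not remove this obstruction: it supplies arcs from points of each \emph{fixed} $A$, with no continuous dependence on $A$.

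The paper's proof exists in its particular form exactly to dodge this. It never adds points directly to the variable $A$: using that $\fin(X)$ is an AR (Proposition~\ref{ar}), it factors an approximation of the identity as $\fin(X) \xrightarrow{f} |K| \xrightarrow{g} \fin(X)$ through a simplicial complex $K$ and modifies $g$ to a map $h$ on $|K|$. The $k+1$ extra points are chosen only at vertices $v$ (via Lemma~\ref{sph.}), and along an edge $\langle v_m, \hat\sigma\rangle$ the points belonging to the other vertex \emph{emerge by sliding out of the moving set} $g(\hat\sigma)$ along arcs that start at points of $g(\hat\sigma)$: a new point appears at the instant it coincides with a point already present in the set, which is continuous in $d_H$, and no point ever has to fade away, because on each simplex the configuration attached to at least one vertex is kept at full strength throughout. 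The extension over higher skeleta then uses the continuity of the union operation together with the maps $r : \sigma \to \fin(\partial\sigma)$ of \cite[Lemma~3.3]{CN}, setting $h(y) = \bigcup_{a \in A(y)} h(a)$ over finite subsets $A(y)$ of the $1$-skeleton. To repair your argument you would need to replace the ``fading into $A$'' device by a mechanism of this kind (in effect, your partition of unity is the nerve map $f$, and the missing content is the construction of $h$ on the nerve); as written, the continuity of $P$ --- the entire difficulty of the proposition --- is asserted rather than achieved, and the specific recipe you give cannot work.
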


\begin{proof}
Let $\mathcal{U}$ be an open cover of $\fin(X)$ and $k < \omega$.
We shall construct a map $\phi : \fin(X) \to \fin(X)$ so that $\phi$ is $\mathcal{U}$-close to the identity map on $\fin(X)$ and $\cl{\phi(\fin(X))} \cap \fin^k(X) = \emptyset$,
 where for a subset $\mathcal{A} \subset \fin(X)$, $\cl{\mathcal{A}}$ means the closure of $\mathcal{A}$ in $\fin(X)$.
Take an open cover $\mathcal{V}$ of $\fin(X)$ that is a star-refinement of $\mathcal{U}$.
Since $\fin(X)$ is an AR by Proposition~\ref{ar},
 there are a simplicial complex $K$ and maps $f : \fin(X) \to |K|$, $g : |K| \to \fin(X)$ such that $gf$ is $\mathcal{V}$-close to the identity map on $\fin(X)$, refer to \cite[Theorem~6.6.2]{Sa6}.
It remains to show that there exists a map $h : |K| \to \fin(X)$ $\mathcal{V}$-close to $g$ such that $\cl{h(|K|)} \cap \fin^k(X) = \emptyset$ because $\phi = hf$ will be the desired map.

Fix any admissible metric $d$ on $X$.
Then we can take a map $\alpha : \fin(X) \to (0,\min\{1,\diam_d{X}\})$ so that the collection $\{B_{d_H}(A,2\alpha(A)) \mid A \in \fin(X)\}$ refines $\mathcal{V}$.
Since $X$ is locally path-connected,
 due to Lemma~\ref{arc}, there is a map $\beta : \fin(X) \to (0,\infty)$ such that for any $A \in \fin(X)$, each point $x \in \overline{N_d}(A,\beta(A))$ has an arc $\gamma : \I \to X$ from some point of $A$ to $x$ of $\diam_d{\gamma(\I)} < \alpha(A)/2$.
We may assume that $\beta(A) \leq \alpha(A)/2$ for every $A \in \fin(X)$.
Combining Lemmas~\ref{subd.1} with \ref{subd.2}, we can replace $K$ with a subdivision so that for each $\sigma \in K$,
\begin{enumerate}
 \item $\diam_{d_H}{g(\sigma)} < \inf_{y \in \sigma} \beta g(y)/2$,
 \item $\sup_{y \in \sigma} \beta g(y) < 2\inf_{y \in \sigma} \beta g(y)$,
 \item $\sup_{y \in \sigma} \alpha g(y) < 4\inf_{y \in \sigma} \alpha g(y)/3$.
\end{enumerate}

For every $v \in K^{(0)}$, fix a point $x(v) \in g(v)$.
According to Lemma~\ref{sph.}, we can find a point $z(v,j) \in X$ with $d(x(v),z(v,j)) = j\beta g(v)/(4(k+1))$ for each $j = 0, \cdots, k$.
Let $h(v) = g(v) \cup \{z(v,j) \mid j = 0, \cdots, k\}$.
Clearly, $\card{h(v)} \geq k + 1$ and $d_H(g(v),h(v)) \leq \beta g(v) \leq \alpha g(v)/2$.
Observe that for any $0 \leq i < j \leq k$,
\begin{align*}
 d(z(v,i),z(v,j)) &\geq |d(x(v),z(v,i)) - d(x(v),z(v,j))| = (j-i)\beta g(v)/(4(k+1))\\
 &\geq \beta g(v)/(4(k+1)).
\end{align*}

Next, we will extend $h$ over $|K^{(1)}|$.
Let $\sigma \in K^{(1)} \setminus K^{(0)}$, $\sigma^{(0)} = \{v_1, v_2\}$ and $\hat\sigma$ be the barycenter of $\sigma$.
Due to conditions (1) and (2), we have for any $m = 1, 2$ and $j = 0, \cdots, k$,
\begin{align*}
 d(z(v_m,j),g(\hat\sigma)) &\leq d(z(v_m,j),g(v_m)) + d_H(g(v_m),g(\hat\sigma)) < \beta g(v_m)/4 + \diam_{d_H}{g(\sigma)}\\
 &< \sup_{y \in \sigma} \beta g(y)/4 + \inf_{y \in \sigma} \beta g(y)/2 < \inf_{y \in \sigma} \beta g(y) \leq \beta g(\hat\sigma).
\end{align*}
Hence there is an arc $\gamma(m,j) : \I \to X$ from some point of $g(\hat\sigma)$ to $z(v_m,j)$ of $\diam_d{\gamma(m,j)(\I)} < \alpha g(\hat\sigma)/2$ by Lemma~\ref{arc}.
Define
 $$h(\hat\sigma) = g(\hat\sigma) \cup \{z(v_m,j) \mid m = 1, 2 \text{ and } j = 0, \cdots, k\}.$$
Note that $\card{h(\hat\sigma)} \geq k + 1$.
Moreover, $d_H(g(\hat\sigma),h(\hat\sigma)) \leq \beta g(\hat\sigma) \leq \alpha g(\hat\sigma)/2$.
Let $\phi : \I \to \fin(X)$ be a map defined by
 $$\phi(t) = g(\hat\sigma) \cup \{\gamma(m,j)(t) \mid m = 1, 2 \text{ and } j = 0, \cdots, k\},$$
 which is a path from $g(\hat\sigma)$ to $h(\hat\sigma)$.
For each $m = 1, 2$, define a map $h : \langle v_m,\hat\sigma \rangle \to \fin(X)$ of the segment between $v_m$ and $\hat\sigma$ in $\sigma$ as follows:
 $$h((1 - t)v_m + t\hat\sigma) = \left\{
 \begin{array}{ll}
  g((1 - 2t)v_m + 2t\hat\sigma) \cup \{z(v_m,j) \mid j = 0, \cdots, k\} &\text{if } 0 \leq t \leq 1/2,\\
  \phi(2t - 1) \cup \{z(v_m,j) \mid j = 0, \cdots, k\} &\text{if } 1/2 \leq t \leq 1.
 \end{array}
 \right.$$
Then for every $y \in \sigma$, when $y = (1 - t)v_m + t\hat\sigma$, $0 \leq t \leq 1/2$,
\begin{align*}
 d_H(g(\hat\sigma),h(y)) &\leq \max\{d_H(g(\hat\sigma),g((1 - 2t)v_m + 2t\hat\sigma)),\max\{d(z(v_m,j),g(\hat\sigma)) \mid j = 0, \cdots, k\}\}\\
 &\leq \max\{\diam_{d_H}{g(\sigma)},\beta g(\hat\sigma)\} \leq \max\{\inf_{y' \in \sigma} \beta g(y')/2,\beta g(\hat\sigma)\} \leq \beta g(\hat\sigma) \leq \alpha g(\hat\sigma)/2,
\end{align*}
 and when $y = (1 - t)v_m + t\hat\sigma$, $1/2 \leq t \leq 1$,
\begin{align*}
 d_H(g(\hat\sigma),h(y)) &\leq \max\{d_H(g(\hat\sigma),\phi(2t - 1)),\max\{d(z(v_m,j),g(\hat\sigma)) \mid j = 0, \cdots, k\}\}\\
 &\leq \max\{\diam_d{\gamma(n,j)(\I)}, \beta g(\hat\sigma) \mid n = 1, 2 \text{ and } j = 0, \cdots, k\}\\
 &\leq \max\{\alpha g(\hat\sigma)/2,\beta g(\hat\sigma)\} = \alpha g(\hat\sigma)/2.
\end{align*}
Hence, due to condition (3), we have
\begin{align*}
 d_H(g(y),h(y)) &\leq d_H(g(y),g(\hat\sigma)) + d_H(g(\hat\sigma),h(y)) \leq \diam_{d_H}{g(\sigma)} + \alpha g(\hat\sigma)/2\\
 &< \inf_{y' \in \sigma} \beta g(y')/2 + \alpha g(\hat\sigma)/2 \leq \beta g(\hat\sigma)/2 + \alpha g(\hat\sigma)/2 \leq 3\alpha g(\hat\sigma)/4\\
 &\leq 3\sup_{y' \in \sigma} \alpha g(y')/4 < \inf_{y' \in \sigma} \alpha g(y') \leq \alpha g(y).
\end{align*}
Note that for each $y \in \sigma$, $h(y)$ contains $\{z(v_1,j) \mid j = 0, \cdots, k\}$ or $\{z(v_2,j) \mid j = 0, \cdots, k\}$,
 so $\card{h(y)} \geq k + 1$.

By induction, we shall construct a map $h : |K| \to \fin(X)$ such that for each $y \in \sigma \in K \setminus K^{(0)}$, $h(y) = \bigcup_{a \in A(y)} h(a)$ for some $A(y) \in \fin(|\sigma^{(1)}|)$.
Assume that $h$ extends over $|K^{(n)}|$ for some $n < \omega$ such that for every $y \in \sigma \in K^{(n)} \setminus K^{(0)}$, $h(y) = \bigcup_{a \in A(y)} h(a)$ for some $A(y) \in \fin(|\sigma^{(1)}|)$.
Take any $\sigma \in K^{(n + 1)} \setminus K^{(n)}$.
By Lemma~3.3 of \cite{CN}, there exists a map $r : \sigma \to \fin(\partial{\sigma})$ such that $r(y) = \{y\}$ for all $y \in \partial{\sigma}$,
 where $\partial{\sigma}$ is the boundary of $\sigma$.
The map $h|_{\partial{\sigma}}$ induces $\tilde{h} : \fin(\partial{\sigma}) \to \fin(X)$ defined by $\tilde{h}(A) = \bigcup_{a \in A} h(a)$.
Then we can obtain the composition $h_\sigma = \tilde{h}r : \sigma \to \fin(X)$.
It follows from the definition that $h_\sigma|_{\partial{\sigma}} = h|_{\partial{\sigma}}$.
Observe that for each $y \in \sigma$,
 $$h_\sigma(y) = \tilde{h}r(y) = \bigcup_{y' \in r(y)} h(y') = \bigcup_{y' \in r(y)} \bigcup_{a \in A(y')} h(a) = \bigcup_{a \in \bigcup_{y' \in r(y)} A(y')} h(a),$$
 where $h(y') = \bigcup_{a \in A(y')} h(a)$ for some $A(y') \in \fin(|\sigma^{(1)}|)$ by the inductive assumption.
Thus we can extend $h$ over $|K^{(n + 1)}|$ by $h|_\sigma = h_\sigma$ for all $\sigma \in K^{(n + 1)} \setminus K^{(n)}$.

After completing this induction, we can obtain a map $h : |K| \to \fin(X)$.
For each $\sigma \in K \setminus K^{(0)}$, each $y \in \sigma$ and each $a \in |\sigma^{(1)}|$, we get
\begin{align*}
 d_H(g(y),h(a)) &\leq d_H(g(y),g(a)) + d_H(g(a),h(a)) < \diam_{d_H}{g(\sigma)} + \alpha g(a)\\
 &< \inf_{y' \in \sigma} \beta g(y')/2 + \sup_{y' \in \sigma} \alpha g(y') \leq \inf_{y' \in \sigma} \alpha g(y')/4 + 4\inf_{y' \in \sigma} \alpha g(y')/3\\
 &= 19\inf_{y' \in \sigma} \alpha g(y')/12 < 2\alpha g(y).
\end{align*}
Therefore we have
 $$d_H(g(y),h(y)) = d_H\Bigg(g(y),\bigcup_{a \in \bigcup_{y' \in r(y)} A(y')} h(a)\Bigg) \leq \max_{a \in \bigcup_{y' \in r(y)} A(y')} d_H(g(y),h(a)) < 2\alpha g(y),$$
 which implies that $h$ is $\mathcal{V}$-close to $g$.
Remark that $\{z(v,j) \mid j = 0, \cdots, k\} \subset h(y)$ for some $v \in \sigma^{(0)}$,
 and hence $\card{h(y)} \geq k + 1$.
It follows that $h(|K|) \cap \fin^k(X) = \emptyset$.
Then we may replace $h(y)$ with $g(y) \cup h(y)$ for every $y \in |K|$,
 so we have $g(y) \subset h(y)$.
The rest of this proof is to show that $\cl{h(|K|)} \cap \fin^k(X) = \emptyset$.

Suppose that there exists a sequence $\{y_n\}_{n < \omega}$ of $|K|$ such that $\{h(y_n)\}_{n < \omega}$ converges to some $A \in \fin^k(X)$.
Take the carrier $\sigma_n \in K$ of $y_n$ and choose $v_n \in \sigma_n^{(0)}$ so that $\{z(v_n,j) \mid j = 0, \cdots, k\} \subset h(y_n)$.
Since $g(y_n) \subset h(y_n)$,
 replacing $\{g(y_n)\}_{n < \omega}$ with a subsequence, we can obtain $B \subset A$ to which $\{g(y_n)\}_{n < \omega}$ converges by Lemma~\ref{subseq.}.
Then $\{\beta g(y_n)\}_{n < \omega}$ converges to $\beta(B) > 0$.
On the other hand, for every $\epsilon > 0$, there exists $n_0 < \omega$ such that if $n \geq n_0$,
 then $d_H(h(y_n),A) < \epsilon$.
Then we can choose $0 \leq i(n) < j(n) \leq k$ for each $n \geq n_0$ so that $z(v_n,i(n)), z(v_n,j(n)) \in B_d(a,\epsilon)$ for some $a \in A$ because
 $$\card{A} \leq k < k + 1 = \card\{z(v_n,j) \mid j = 0, \cdots, k\}.$$
Note that
\begin{align*}
 \beta g(y_n)/(8(k+1)) &\leq \sup_{y \in \sigma_n} \beta g(y)/(8(k+1)) < \inf_{y \in \sigma_n} \beta g(y)/(4(k+1)) \leq \beta g(v_n)/(4(k+1))\\
 &\leq d(z(v_n,i(n)), z(v_n,j(n))) < 2\epsilon,
\end{align*}
 which means that $\{\beta g(y_n)\}_{n < \omega}$ converges to $0$.
This is a contradiction.
Consequently, $\cl{h(|K|)} \cap \fin^k(X) = \emptyset$.
\end{proof}

Combining Lemma~\ref{str.Z} with Proposition~\ref{str.Z_sigma}, we can obtain the following:

\begin{prop}\label{cpt.str.Z}
Let $X$ be a non-degenerate, connected and locally path-connected space.
Then every compact subset of $\fin(X)$ is a strong $Z$-set.
\end{prop}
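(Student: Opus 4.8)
The plan is to write an arbitrary compact subset of $\fin(X)$ as a countable union of strong $Z$-sets, stratified by cardinality, and then invoke Lemma~\ref{str.Z}. First I would record the ambient structure: since $X$ is locally path-connected, Proposition~\ref{ar} guarantees that $\fin(X)$ is an ANR. Let $\mathcal{K} \subset \fin(X)$ be compact. Being compact in a metrizable space, $\mathcal{K}$ is closed in $\fin(X)$ and topologically complete, so the standing hypotheses of Lemma~\ref{str.Z} are satisfied except for exhibiting the decomposition of $\mathcal{K}$ into countably many strong $Z$-sets.

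The decomposition I have in mind is $\mathcal{K} = \bigcup_{k < \omega} \big(\mathcal{K} \cap \fin^k(X)\big)$, which holds because every member of $\fin(X)$ is finite and therefore lies in $\fin^k(X)$ for some $k$. I would emphasize that one cannot simply bound the cardinality on $\mathcal{K}$: a convergent sequence of finite sets may have unbounded cardinality (points clustering near a single limit), so passing to the full countable union is genuinely necessary rather than reducing to one skeleton. For each $k$, the set $\mathcal{K} \cap \fin^k(X)$ is the intersection of the closed set $\mathcal{K}$ with the closed set $\fin^k(X)$, hence is closed in $\fin(X)$.

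Next I would use the elementary observation that a closed subset of a strong $Z$-set is itself a strong $Z$-set. Indeed, if for a given open cover $\mathcal{U}$ a map $f : \fin(X) \to \fin(X)$ witnesses that $\fin^k(X)$ is a strong $Z$-set, that is $f$ is $\mathcal{U}$-close to the identity and $\cl{f(\fin(X))} \cap \fin^k(X) = \emptyset$, then the very same $f$ also misses any closed subset of $\fin^k(X)$, since $\cl{f(\fin(X))}$ misses the smaller set a fortiori. Because $X$ is non-degenerate, connected and locally path-connected, Proposition~\ref{str.Z_sigma} tells us that each $\fin^k(X)$ is a strong $Z$-set in $\fin(X)$; hence each $\mathcal{K} \cap \fin^k(X)$ is a strong $Z$-set as well.

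Finally, $\mathcal{K}$ is a topologically complete, closed subset of the ANR $\fin(X)$ written as a countable union of strong $Z$-sets, so Lemma~\ref{str.Z} delivers that $\mathcal{K}$ is a strong $Z$-set, completing the argument. The only step requiring care is the stratification: the main thing to get right is that $\card$ is unbounded on a general compact $\mathcal{K}$, which is precisely why the countable-union form of Lemma~\ref{str.Z} is invoked instead of a single skeleton; every other step follows immediately from the cited lemma and proposition together with the triviality that closed subsets of strong $Z$-sets remain strong $Z$-sets.
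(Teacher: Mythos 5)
Your proof is correct and is exactly the argument the paper intends by its one-line proof (``Combining Lemma~\ref{str.Z} with Proposition~\ref{str.Z_sigma}''): you stratify the compact set $\mathcal{K}$ as $\bigcup_{k<\omega}(\mathcal{K}\cap\fin^k(X))$, note each piece is a closed subset of the strong $Z$-set $\fin^k(X)$ and hence a strong $Z$-set, and apply Lemma~\ref{str.Z} in the ANR $\fin(X)$. Your remark that cardinality is unbounded on a general compact $\mathcal{K}$, so the countable-union lemma is genuinely needed, is a worthwhile detail the paper leaves implicit.
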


\section{The $\kappa$-discrete $n$-cells property of $\fin(X)$}

This section is devoted to detecing the $\kappa$-discrete $n$-cells property in $\fin(X)$.
First, we show the following lemma:

\begin{lem}\label{lfap}
Let $X$ be a locally path-connected space.
Suppose that any neighborhood of each point in $X$ contains a discrete subset of cardinality $\geq \kappa$.
Then the hyperspace $\fin(X)$ satisfies the following:
\begin{itemize}
 \item Let $K_\gamma$ be a simplicial complex, $\gamma < \kappa$.
 For each open cover $\mathcal{V}$ of $\fin(X)$, and each map $g : \bigoplus_{\gamma < \kappa} |K_\gamma| \to \fin(X)$, there exists a map $h : \bigoplus_{\gamma < \kappa} |K_\gamma| \to \fin(X)$ such that $h$ is $\mathcal{V}$-close to $g$ and the family $\{h(|K_\gamma|)\}_{\gamma < \kappa}$ is locally finite in $\fin(X)$.
\end{itemize}
\end{lem}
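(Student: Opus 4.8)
The plan is to reproduce the attach-a-point construction from the proof of Proposition~\ref{str.Z_sigma}, but now to choose the attached points so that they \emph{separate} the $\kappa$ pieces $|K_\gamma|$ rather than push maps off $\fin^k(X)$. First I would fix an admissible metric $d$ on $X$ and pass to control functions exactly as before: choose $\alpha : \fin(X) \to (0,1)$ so that $\{B_{d_H}(A,2\alpha(A)) \mid A \in \fin(X)\}$ refines $\mathcal{V}$, and use Lemma~\ref{arc} to produce $\beta \leq \alpha/2$ such that every $x \in \overline{N_d}(A,\beta(A))$ is joined to a point of $A$ by an arc of diameter $< \alpha(A)/2$. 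Applying Lemmas~\ref{subd.1} and \ref{subd.2} to the restriction of $g$ to each $|K_\gamma|$ separately, I would replace each $K_\gamma$ by a subdivision so that on every simplex $\sigma$ the quantities $\diam_{d_H} g(\sigma)$, $\beta g$, and $\alpha g$ satisfy the smallness and bounded-variation inequalities (1)--(3) used in Proposition~\ref{str.Z_sigma}. This part is routine.

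Next I would attach a single tag to each image. For each $\gamma$ and each vertex $v \in K_\gamma^{(0)}$ I would fix $x(v) \in g(v)$ and choose a tag $z_\gamma(v)$ within $\beta g(v)$ of $x(v)$, so that $h(v) = g(v) \cup \{z_\gamma(v)\}$ satisfies $d_H(g(v),h(v)) \leq \beta g(v) \leq \alpha g(v)/2$. I would then extend $h$ over the skeleta exactly as in Proposition~\ref{str.Z_sigma}: using the map $r : \sigma \to \fin(\partial{\sigma})$ of Lemma~3.3 of \cite{CN}, the induced $\tilde{h}(A) = \bigcup_{a \in A} h(a)$, and the arcs supplied by Lemma~\ref{arc} to move continuously between the vertex tags. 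As there, this gives $g(y) \subset h(y)$, the estimate $d_H(g(y),h(y)) < 2\alpha g(y)$ (so $h$ is $\mathcal{V}$-close to $g$), and the property that every $h(y)$ still contains the tag of some vertex of its carrier simplex.

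The local finiteness is where the standing hypothesis enters. The reduction I would use is that it suffices to arrange the tags so that the family $\{T_\gamma\}_{\gamma < \kappa}$ of tag-value sets (the $z_\gamma(v)$ together with the transit arcs) is locally finite in $X$: for then any sequence $h(y_n) \to A$ with pairwise distinct indices $\gamma_n$ would, by Lemma~\ref{subseq.}, force a sequence of tags to converge to a point $a \in A$, contradicting local finiteness in $X$. To build such tags I would fix a locally finite open cover $\{W_s\}_{s \in S}$ of $X$ by sets of small diameter, use the hypothesis to select inside each $W_s$ a closed discrete set $E_s = \{e_s^\gamma \mid \gamma < \kappa\}$ of cardinality $\kappa$, and assign to $\gamma$ the tag $e_s^\gamma$ whenever $x(v) \in W_s$.

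The hard part will be reconciling three competing demands on the tag $z_\gamma(\cdot)$: it must stay within $\beta g$ of $x(v)$ (scale constraint), it must vary continuously in $y$ (which forces the transit arcs, during which the tag leaves the discrete set), and across distinct $\gamma$ the tags must not accumulate. The subtlety is that the discreteness of each $E_s$ is not uniform, so an arc of positive diameter could in principle let tags of distinct indices pile up near a single point. I would control this by the mechanism already present in Proposition~\ref{str.Z_sigma}: at a hypothetical cluster point the values $\beta g(y_n)$ converge along the sequence to $\beta(B) > 0$ (again via Lemma~\ref{subseq.}), so near the cluster the transit arcs have diameters bounded below away from zero; refining $\{W_s\}$ and fixing the separations inside each $E_s$ relative to this fixed lower bound, so that tags of distinct $\gamma$ which ever come within that scale of one another must be distinct points of one $E_s$, one is driven to the impossible conclusion that infinitely many distinct points of a closed discrete set converge to $a$. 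Making this bookkeeping precise---and in particular arranging the arc-switching between the sets $E_s$ over overlaps of the cover to be continuous without destroying the locally finite spreading---is the principal technical work, and it is exactly here that the assumption that every neighborhood contains a discrete subset of cardinality $\geq \kappa$ is used in full strength.
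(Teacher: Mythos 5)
Your construction of $h$ (tags at vertices, extension over skeleta via Lemma~3.3 of \cite{CN} and Lemma~\ref{arc}, the estimate $d_H(g(y),h(y))<2\alpha g(y)$, and $g(y)\subset h(y)$) matches the paper, but the part you defer as ``the principal technical work'' is exactly where your plan breaks, and the reduction you propose is not the one that can be made to work. First, requiring the family $\{T_\gamma\}_{\gamma<\kappa}$ of tag sets \emph{including the transit arcs} to be locally finite in $X$ is unattainable: if $g$ is constant at some $A_0\in\fin(X)$, every transit arc begins at a point of $g(\hat\sigma)=A_0$, so every $T_\gamma$ meets the finite set $A_0$ and the family can never be locally finite at those points. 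Second, even dropping the arcs, your single locally finite cover $\{W_s\}_{s\in S}$ of fixed small mesh cannot supply admissible tags at all: the tag for a vertex $v$ must lie within $\beta g(v)$ of $x(v)$, and $\beta g(v)$ has no positive lower bound over the vertices of $\kappa$ many complexes, whereas nothing forces the designated point $e_s^\gamma\in E_s$ to be anywhere near $x(v)$ inside $W_s$. Third, your proposed repair --- fixing the separations inside each $E_s$ ``relative to'' the lower bound $\beta(B)>0$ --- is circular, since $B$ and hence $\beta(B)$ come from a hypothetical cluster sequence that does not exist at the time the tags are chosen. (Also, the transit arcs have diameters bounded \emph{above} by $\alpha g(\hat\sigma)/2$, not below; the quantity bounded below along a cluster sequence is $\beta g(y_i)$.)

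The missing idea in the paper is a \emph{multi-scale} version of your tag sets. For every $n<\omega$ one takes a locally finite open cover $\mathcal{V}_n$ of $X$ of mesh $<1/n$, chooses in each non-empty $V\in\mathcal{V}_n$ a discrete set $Z(V)=\{z_V^\gamma\}_{\gamma<\kappa}$, and arranges by transfinite induction (using that $X$ has no isolated points) that the slices $Z^\gamma(n)=\{z_V^\gamma\mid \emptyset\neq V\in\mathcal{V}_n\}$ are pairwise disjoint in $\gamma$; each $Z(n)=\bigcup_V Z(V)$ is locally finite in $X$. The scale of the tag at a vertex $v\in K_\gamma^{(0)}$ is then dictated by $\beta$: pick $n_v$ with $1/n_v<\beta g(v)/4\le 1/(n_v-1)$ and take $z^\gamma(v)\in Z^\gamma(n_v)$ with $d(z^\gamma(v),g(v))<1/n_v$, which exists precisely because $\mathcal{V}_{n_v}$ has mesh $<1/n_v$. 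Local finiteness of $\{h(|K_\gamma|)\}_{\gamma<\kappa}$ is then proved by a dichotomy on scales rather than by local finiteness of the tags in $X$ (which genuinely fails across scales): if $h(y_i)\to A$ with pairwise distinct indices, then $g(y_i)\subset h(y_i)$ and Lemma~\ref{subseq.} give a subsequence with $g(y_i)\to B\subset A$, so $\beta g(y_i)\to\beta(B)>0$; if the scales $n_{v_i}$ were unbounded, condition (2) of the subdivision would force $\beta g(y_i)\le 2\beta g(v_i)\le 8/(n_{v_i}-1)\to 0$, a contradiction, while if they were bounded, after passing to a constant scale $n_0$ the tags $z^{\gamma_i}(v_i)$ are pairwise distinct (here the disjointness of the $\gamma$-slices is used) and lie in the single locally finite set $Z(n_0)$, yet accumulate in $A$ --- again a contradiction. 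Your draft correctly senses that Lemma~\ref{subseq.} and the positivity of $\beta(B)$ must drive the argument, but without the scale selection $n_v$ tied to $\beta g(v)$ and the sequence of covers $\mathcal{V}_n$, neither the existence of admissible tags nor the concluding contradiction can be obtained.
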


\begin{proof}
Fix any admissible metric $d$ on $X$.
Let $\alpha, \beta : \fin(X) \to (0,1)$ be the same maps and replace each $K_\gamma$, $\gamma < \kappa$, with the same subdivision as in Proposition~\ref{str.Z_sigma}.
For each $n < \omega$, we can find a locally finite open cover $\mathcal{V}_n$ of $X$ of mesh $< 1/n$.
By the assumption, for every $n < \omega$, each non-empty $V \in \mathcal{V}_n$ contains a discrete subset $Z(V) = \{z_V^\gamma\}_{\gamma < \kappa}$ of cardinality $\kappa$.
Let $Z^\gamma(n) = \{z_V^\gamma \mid \emptyset \neq V \in \mathcal{V}_n\}$, $\gamma < \kappa$.
Here we may assume that $Z^\gamma(n) \cap Z^\tau(n) = \emptyset$ if $\gamma \neq \tau$.
Indeed, we shall show it by transfinite induction.
Suppose that for some $\gamma < \kappa$, $Z^\tau(n) \cap Z^{\tau'}(n) = \emptyset$ if $\tau < \tau' < \gamma$.
By the local finiteness of $\mathcal{V}_n$, for every $z_V^\gamma \in Z^\gamma(n)$, $\{V' \in \mathcal{V}_n \mid z_V^\gamma \in V'\}$ is finite.
Since $X$ has no isolated points and each $Z(V)$ is discrete,
 we can find a point $x_V^\gamma \in V$ sufficiently close to $z_V^\gamma$ such that $x_V^\gamma \notin \bigcup_{\tau < \gamma} Z^\tau(n)$ and even if $z_V^\gamma$ is substituted by $x_V^\gamma$,
 $Z(V)$ is still discrete.
Due to this substitution, we have that $Z^\tau(n) \cap Z^\gamma(n) = \emptyset$ for all $\tau < \gamma$.
Put $Z(n) = \bigcup_{\emptyset \neq V \in \mathcal{V}_n} Z(V) = \bigoplus_{\gamma < \kappa} Z^\gamma(n)$,
 that is locally finite in $X$.

To begin with, we shall construct the restriction $h|_{K_\gamma^{(0)}}$, $\gamma < \kappa$.
For every $v \in K_\gamma^{(0)}$, there is $n_v \geq 2$ such that $1/n_v < \beta g(v)/4 \leq 1/(n_v - 1)$.
Then we can find a point $z^\gamma(v) \in Z^\gamma(n_v) \subset Z(n_v)$, $\gamma < \kappa$, so that $d(z^\gamma(v),g(v)) < 1/n_v$.
Remark that for any $u \in K_\tau^{(0)}$ and $u' \in K_{\tau'}^{(0)}$ with $n_{u} = n_{u'}$, $z^\tau(u) \neq z^{\tau'}(u')$ if $\tau < \tau' < \kappa$.
Let $h(v) = g(v) \cup \{z^\gamma(v)\} \in \fin(X)$.
Note that
 $$d_H(g(v),h(v)) < 1/n_v < \beta g(v)/4 \leq \alpha g(v)/2.$$

After the same construction of map as in Proposition~\ref{str.Z_sigma}, we can obtain a map $h : \bigoplus_{\gamma < \kappa} |K_\gamma| \to \fin(X)$ so that $h$ is $\mathcal{V}$-close to $g$ and for any $\gamma < \kappa$ and any $y \in |K_\gamma|$, $h(y)$ contains a point $z^\gamma(v) \in Z(n_v)$ for some $v \in \sigma^{(0)}$,
 where $\sigma \in K_\gamma$ is the carrier of $y$.
Here we may replace $h(y)$ with the union $g(y) \cup h(y)$ for every $y \in |K_\gamma|$,
 so we have $g(y) \subset h(y)$.
It remains to show that $\{h(|K_\gamma|)\}_{\gamma < \kappa}$ is locally finite in $\fin(X)$.

Suppose the contrary,
 so we can find a sequence $\{\gamma_i\}_{i < \omega}$ of $\kappa$ such that $\gamma_i \neq \gamma_j$ if $i \neq j$,
 and $\{h(y_{\gamma_i})\}_{i < \omega}$, $y_{\gamma_i} \in |K_{\gamma_i}|$, converges to some $A \in \fin(X)$.
For the sake of convenience, replace each $\gamma_i$ with $i$.
Let $\sigma_i \in K_i$ be the carrier of $y_i$ and choose a vertex $v_i \in \sigma_i^{(0)}$ so that $z^i(v_i) \in h(y_i)$.
Since $g(y_i) \subset h(y_i)$,
 replacing $\{g(y_i)\}_{i < \omega}$ with a subsequence, we can obtain a subset $B \subset A$ to which $\{g(y_i)\}_{i < \omega}$ converges by Lemma~\ref{subseq.}.
Then $\{\beta g(y_i)\}_{i < \omega}$ is converging to $\beta(B) > 0$.
On the other hand, any subsequence of $\{z^i(v_i)\}_{i < \omega}$ has an accumulation point in $A$ because $\{h(y_i)\}_{i < \omega}$ converges to $A$.
Observe that $\{n_{v_i}\}_{i < \omega}$ diverges to $\infty$.
Indeed, supposing the converse, we can find $n_0 < \omega$ and replace $\{n_{v_i}\}_{i < \omega}$ with a subsequence so that $n_{v_i} = n_0$ for all $i < \omega$.
By the choice of $z^i(v_i)$, $\{z^i(v_i)\}_{i < \omega}$ is pairwise distinct and contained in the locally finite subset $Z(n_0)$,
 which is a contradiction.
Hence $\{\beta g(v_i)\}_{i < \omega}$ converges to $0$ since $\beta g(v_i)/4 \leq 1/(n_{v_i} - 1)$.
Moreover, it follows from condition (2) of Proposition~\ref{str.Z_sigma} that for every $i < \omega$,
 $$\beta g(y_i) \leq \sup_{y \in \sigma_i} \beta g(y) < 2\inf_{y \in \sigma_i} \beta g(y) \leq 2\beta g(v_i).$$
Therefore $\{\beta g(y_i)\}_{i < \omega}$ converges to $0$,
 which is a contradiction.
Thus we conclude that the family $\{h(|K_\gamma|)\}_{\gamma < \kappa}$ is locally finite in $\fin(X)$.
\end{proof}

It is said that a space $X$ has \textit{the countable locally finite approximation property} provided that for every open cover $\mathcal{U}$ of $X$, there exists a sequence $\{f_n : X \to X\}_{n < \omega}$ of maps such that each $f_n$ is $\mathcal{U}$-close to the identity map on $X$ and the family $\{f_n(X)\}_{n < \omega}$ is locally finite in $X$.

\begin{prop}\label{clfap}
Let $X$ be a locally path-connected and nowhere locally compact space.
Then $\fin(X)$ has the countable locally finite approximation property.
\end{prop}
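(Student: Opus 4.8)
The plan is to reduce the statement to the locally finite approximation behaviour already furnished by Lemma~\ref{lfap}, applied with $\omega$ in place of $\kappa$, and then to transport the resulting maps on a polyhedron back to self-maps of $\fin(X)$ via an ANR factorization. First I would fix an open cover $\mathcal{U}$ of $\fin(X)$ and choose an open star-refinement $\mathcal{V}$ of $\mathcal{U}$. Since $X$ is locally path-connected, Proposition~\ref{ar} ensures that $\fin(X)$ is an ANR, so by the standard nerve factorization there are a simplicial complex $K$ and maps $u : \fin(X) \to |K|$ and $v : |K| \to \fin(X)$ with $vu$ being $\mathcal{V}$-close to the identity on $\fin(X)$, refer to \cite[Theorem~6.6.2]{Sa6}. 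This reduces the problem to perturbing $v$ into countably many maps with locally finite images.

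Next I would verify that $X$ satisfies the hypothesis of Lemma~\ref{lfap} with $\kappa = \omega$, namely that every neighborhood of each point contains a countably infinite closed discrete subset. Fix an admissible metric $d$ on $X$. Given $x \in X$ and an open neighborhood $U$ of $x$, choose $\epsilon > 0$ with $\overline{B_d}(x,\epsilon/2) \subset U$. Because $X$ is nowhere locally compact, the closed neighborhood $\overline{B_d}(x,\epsilon/2)$ is not compact, so it contains a sequence with no convergent subsequence; as it is closed in $X$, a subsequence of distinct terms has no accumulation point in $X$ and hence forms an infinite closed discrete subset of $U$. Consequently the construction carried out in the proof of Lemma~\ref{lfap} applies with $\kappa$ replaced by $\omega$, the transfinite induction there reducing to an ordinary one.

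Finally, setting $K_n = K$ for each $n < \omega$ and defining $g : \bigoplus_{n < \omega} |K_n| \to \fin(X)$ to be $v$ on every copy, the $\omega$-version of Lemma~\ref{lfap} supplies a map $h$ that is $\mathcal{V}$-close to $g$ and whose images $\{h(|K_n|)\}_{n < \omega}$ form a locally finite family in $\fin(X)$. Writing $h_n = h|_{|K_n|}$ and $f_n = h_n u : \fin(X) \to \fin(X)$, each $f_n$ is $\mathcal{V}$-close to $vu$, which is in turn $\mathcal{V}$-close to $\id$; since $\mathcal{V}$ star-refines $\mathcal{U}$, each $f_n$ is $\mathcal{U}$-close to $\id$. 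Moreover $f_n(\fin(X)) \subset h_n(|K|) = h(|K_n|)$, so the family $\{f_n(\fin(X))\}_{n < \omega}$ is locally finite, which yields the countable locally finite approximation property.

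I expect the only genuine obstacles to be the extraction of the closed discrete subsets from nowhere local compactness (the non-compactness argument above) and the star-refinement bookkeeping needed to pass from ``$f_n$ is $\mathcal{V}$-close to $vu$'' and ``$vu$ is $\mathcal{V}$-close to $\id$'' to ``$f_n$ is $\mathcal{U}$-close to $\id$''; everything else is a direct re-use of the machinery of Lemma~\ref{lfap}.
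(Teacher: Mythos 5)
Your proof is correct and follows essentially the same route as the paper: the paper likewise reduces the statement, via the ANR nerve factorization through a simplicial complex (the argument of Proposition~\ref{str.Z_sigma}), to the locally finite approximation statement of Lemma~\ref{lfap} applied with $\kappa = \omega$, using nowhere local compactness to supply the infinite discrete subsets in every neighborhood. Your write-up merely makes explicit two steps the paper leaves implicit, namely the extraction of closed discrete sequences from non-compact closed balls and the star-refinement bookkeeping.
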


\begin{proof}
By the same argument as Proposition~\ref{str.Z_sigma}, we need only to show that for any simplicial complex $K$, any map $g : |K| \to \fin(X)$ and any open cover $\mathcal{V}$ of $\fin(X)$, there exists a map $g_i : |K| \to \fin(X)$ for each $i < \omega$ such that each $g_i$ is $\mathcal{V}$-close to $g$ and $\{g_i(|K|)\}_{i < \omega}$ is locally finite in $\fin(X)$.
Since $X$ is nowhere locally compact,
 it satisfies the assumption as in Lemma~\ref{lfap} with respect to $\kappa = \omega$.
Hence this lemma guarantees the countable locally finite approximation property of $\fin(X)$.
\end{proof}

The next two lemmas are useful for recognizing the $\kappa$-discrete $n$-cells property in a space.

\begin{lem}[Lemma~3.1 of \cite{BZ}]\label{loc.fin.}
Let $n < \omega$.
A space $X$ has the $\kappa$-discrete $n$-cells property if and only if the following condition holds:
\begin{itemize}
 \item For each open cover $\mathcal{U}$ of $X$, and each map $f : \bigoplus_{\gamma < \kappa} A_\gamma \to X$, where each $A_\gamma = \I^n$,
 there is a map $g : \bigoplus_{\gamma < \kappa} A_\gamma \to X$ such that $g$ is $\mathcal{U}$-close to $f$ and the family $\{g(A_\gamma)\}_{\gamma < \kappa}$ is locally finite in $X$.
\end{itemize}
\end{lem}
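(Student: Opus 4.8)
The forward implication is immediate: a discrete family is in particular locally finite, so the $\kappa$-discrete $n$-cells property trivially yields the stated approximation by locally finite families. Hence the whole content lies in the converse, and I would organize it as follows. Fix an open cover $\mathcal{U}$ of $X$ and a map $f : \bigoplus_{\gamma<\kappa}A_\gamma\to X$ with each $A_\gamma=\I^n$. First I would simply invoke the hypothesis to replace $f$ by a map $g$ that is $\mathcal{U}$-close to $f$ and for which the family $\{g(A_\gamma)\}_{\gamma<\kappa}$ is locally finite. The task is then purely to upgrade ``locally finite'' to ``discrete'' by a further, smaller perturbation; note that some genuine perturbation is unavoidable, since two fixed compacta sharing a point can never be separated, so the argument must feed on the approximation freedom granted by the hypothesis itself.

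The combinatorial heart of the upgrade is a colouring step. Each $g(A_\gamma)$ is compact, hence closed in the metric space $X$, so by local finiteness it meets only finitely many of the other sets $g(A_\tau)$. Thus the intersection graph on the vertex set $\kappa$, with an edge between $\gamma$ and $\tau$ exactly when $g(A_\gamma)\cap g(A_\tau)\neq\emptyset$, has every vertex of finite degree. Processing the vertices along a well-ordering of $\kappa$ and assigning to each the least colour in $\omega$ not already used on its (finitely many) neighbours produces a proper colouring $c:\kappa\to\omega$. This splits the index set as $\kappa=\bigsqcup_{m<\omega}\Gamma_m$, with $\Gamma_m=c^{-1}(m)$, in such a way that within each class the sets $\{g(A_\gamma)\}_{\gamma\in\Gamma_m}$ are pairwise disjoint. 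Since the whole family is locally finite, a pairwise disjoint, locally finite family of closed sets is automatically discrete (shrink a finite-meeting neighbourhood of a point by deleting the finitely many members not containing it); hence each class $\mathcal{F}_m=\{g(A_\gamma)\}_{\gamma\in\Gamma_m}$ is a discrete family, and I have expressed $\{g(A_\gamma)\}_{\gamma<\kappa}$ as a \emph{countable} union of discrete subfamilies.

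The remaining, and main, difficulty is to amalgamate these countably many discrete classes into a single discrete family. The plan is a controlled limiting construction: I would fix a null sequence of open covers $\mathcal{U}_0\succ\mathcal{U}_1\succ\cdots$ refining $\mathcal{U}$ and build maps $g=h_0,h_1,h_2,\dots$ in which $h_m$ differs from $h_{m-1}$ only by a $\mathcal{U}_m$-small adjustment of the cells indexed by $\Gamma_m$, re-invoking the locally-finite hypothesis together with the metric structure of $X$ (its collectionwise normality separates the already-treated discrete family $\{h_{m-1}(A_\gamma)\mid\gamma\in\Gamma_0\cup\cdots\cup\Gamma_{m-1}\}$ by a discrete open swelling) so as to re-arrange the class-$\Gamma_m$ images into discreteness relative to the previously fixed part while keeping them mutually discrete. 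Choosing each $\mathcal{U}_m$ fine enough that the separation achieved among classes $\leq m$ survives all later, strictly smaller perturbations, the sequence $h_m$ converges to a map $h$ that is $\mathcal{U}$-close to $f$ and for which $\{h(A_\gamma)\}_{\gamma<\kappa}$ is discrete. I expect this amalgamation to be the crux: one must simultaneously ensure that the convergence is uniform enough to produce a continuous limit and that the finite-at-a-time discreteness established at each stage is not destroyed by the infinitely many subsequent adjustments — precisely the bookkeeping (moves dominated by the separation gaps already opened) that makes such discreteness/$Z$-set limit arguments delicate.
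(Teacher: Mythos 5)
First, a point of comparison: the paper does not prove this statement at all --- it is quoted verbatim as Lemma~3.1 of \cite{BZ}, so your attempt has to stand entirely on its own merits. The parts of it that are actually proved are fine: the forward implication is indeed trivial, and your colouring step is correct and well argued (each compact image meets only finitely many others by local finiteness, the intersection graph is locally finite, greedy colouring along a well-order of $\kappa$ gives a proper colouring into $\omega$, and a pairwise disjoint locally finite family of closed sets is discrete). This correctly reduces the problem to amalgamating countably many discrete subfamilies, and it also isolates the right reformulation: for a locally finite family of compacta, discreteness is exactly pairwise disjointness.

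The amalgamation step, however, is a genuine gap, and not merely an omitted routine verification: the mechanism you propose cannot work as described. The hypothesis of the lemma, when ``re-invoked,'' only produces a map whose \emph{entire} image family is locally finite; it gives no control whatsoever over how the images of the cells indexed by $\Gamma_m$ sit relative to a \emph{fixed} closed set such as $\bigcup\{h_{m-1}(A_\gamma)\mid \gamma\in\Gamma_0\cup\dots\cup\Gamma_{m-1}\}$. Indeed, once a map already has locally finite images, the hypothesis is satisfied by that map itself, so re-applying it may return the identity perturbation and achieve nothing; it is simply not a tool for making images avoid anything. Collectionwise normality does not fill this hole either: it separates families that are \emph{already} disjoint, it cannot create disjointness. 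So the crux of the lemma --- where pairwise disjointness comes from at all, given only the ability to make families locally finite --- is never addressed; your own opening remark that two intersecting compacta can never be separated shows you are aware that some new idea is needed, but the proposal does not supply one (the convergence/bookkeeping issues you flag are secondary to this). Any correct proof must extract disjointness by applying the hypothesis to cleverly \emph{modified} maps (e.g.\ with re-indexed or duplicated systems of cells, exploiting $\kappa\cdot\omega=\kappa$) rather than by iterating it on the given map, and that is precisely the content of Banakh and Zarichnyy's argument that your sketch leaves unreconstructed.
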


\begin{lem}[Lemma~3.2 of \cite{BZ}]\label{cof.}
Let $n < \omega$.
A space $X$ with the countable locally finite approximation property has the $\kappa$-discrete $n$-cells property if and only if $X$ has the $\lambda$-discrete $n$-cells property for any $\lambda \leq \kappa$ of uncountable cofinality.
\end{lem}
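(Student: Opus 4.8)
The plan is to prove the two implications separately, with essentially all the content concentrated in the ``if'' direction when $\kappa$ has countable cofinality.

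For the ``only if'' direction I would argue directly: assume $X$ has the $\kappa$-discrete $n$-cells property and let $\lambda \leq \kappa$. Given a map $f : \bigoplus_{\gamma < \lambda} A_\gamma \to X$ and an open cover $\mathcal{U}$, I extend $f$ to $\tilde{f} : \bigoplus_{\gamma < \kappa} A_\gamma \to X$ by defining it arbitrarily (say, as a constant) on the remaining cells, apply the $\kappa$-discrete $n$-cells property to obtain $g$ that is $\mathcal{U}$-close to $\tilde{f}$ with $\{g(A_\gamma)\}_{\gamma < \kappa}$ discrete, and restrict to the indices $\gamma < \lambda$. Since every subfamily of a discrete family is discrete, $\{g(A_\gamma)\}_{\gamma < \lambda}$ is discrete and the restriction is $\mathcal{U}$-close to $f$. (No cofinality hypothesis on $\lambda$ is needed here.)

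For the ``if'' direction, suppose $X$ has the countable locally finite approximation property together with the $\lambda$-discrete $n$-cells property for every $\lambda \leq \kappa$ of uncountable cofinality. If $\kappa$ itself has uncountable cofinality, taking $\lambda = \kappa$ finishes at once, so I assume $\kappa$ has countable cofinality. By Lemma~\ref{loc.fin.} it suffices, for a given $f : \bigoplus_{\gamma < \kappa} A_\gamma \to X$ and open cover $\mathcal{U}$, to produce a $\mathcal{U}$-close map $g$ with $\{g(A_\gamma)\}_{\gamma < \kappa}$ merely \emph{locally finite}. I fix an increasing sequence of cardinals $\kappa_n$ with $\sup_n \kappa_n = \kappa$, each of uncountable cofinality (for instance successor cardinals of a cofinal $\omega$-sequence), and partition the index set as $\kappa = \bigsqcup_{n < \omega} T_n$ with $\card{T_n} = \kappa_n$, which is possible because $\sum_{n < \omega} \kappa_n = \kappa$. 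Then I apply the countable locally finite approximation property to a suitable star-refinement of $\mathcal{U}$ to obtain self-maps $f_n : X \to X$, each close to the identity, with $\{f_n(X)\}_{n < \omega}$ locally finite; passing to closures and using that $X$ is metrizable (hence normal), I swell this countable locally finite family to a locally finite family of open sets $\{U_n\}_{n < \omega}$ with $f_n(X) \subset U_n$. For each $n$ I consider the map $f_n \circ f$ restricted to $\bigoplus_{\gamma \in T_n} A_\gamma$, whose image lies in $f_n(X) \subset U_n$, and apply the $\kappa_n$-discrete $n$-cells property (legitimate since $\kappa_n$ has uncountable cofinality) with a cover fine enough that the resulting $g_n$ remains $\mathcal{U}$-close to $f$ and satisfies $g_n(A_\gamma) \subset U_n$ for all $\gamma \in T_n$; the family $\{g_n(A_\gamma)\}_{\gamma \in T_n}$ is then discrete, and in particular locally finite. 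I define $g$ to agree with $g_n$ on $\bigoplus_{\gamma \in T_n} A_\gamma$.

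Finally I would verify that $\{g(A_\gamma)\}_{\gamma < \kappa}$ is locally finite: given $x \in X$, local finiteness of $\{U_n\}$ provides a neighborhood $W$ of $x$ meeting $U_n$, hence meeting $g(A_\gamma)$ for $\gamma \in T_n$, only for $n$ in a finite set $F$, while for each $n \in F$ the within-block local finiteness of $\{g_n(A_\gamma)\}_{\gamma \in T_n}$ supplies a neighborhood of $x$ meeting only finitely many of those cells; intersecting these finitely many neighborhoods with $W$ gives a neighborhood meeting only finitely many $g(A_\gamma)$ in total, and Lemma~\ref{loc.fin.} then upgrades this to the $\kappa$-discrete $n$-cells property. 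The main obstacle is exactly this gluing in the countable-cofinality case: a countable union of locally finite families is in general not locally finite, so the whole role of the approximation property is to separate the $\omega$ blocks via a locally finite family of images (swelled to a locally finite open family) so that each point sees only finitely many blocks. The delicate bookkeeping is reconciling the two closeness demands on $g_n$—that it stay $\mathcal{U}$-close to $f$ while having its image confined to $U_n$—which is why the star-refinement and the per-block choice of fine covers are set up before applying the $\kappa_n$-discrete property.
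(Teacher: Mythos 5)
The paper itself contains no proof of this lemma---it is imported wholesale as Lemma~3.2 of \cite{BZ}---so your attempt can only be measured against the original source, and in substance it reproduces that argument: the ``only if'' direction by extending over the missing cells and restricting (correctly noting no cofinality hypothesis is needed there), and the ``if'' direction, in the countable-cofinality case, by splitting the index set into blocks of uncountable-cofinality cardinality, using the countable locally finite approximation property to produce maps $f_n$ whose images form a locally finite family, swelling the closures $\cl{f_n(X)}$ to a locally finite open family $\{U_n\}$ (legitimate in a metrizable, hence paracompact, space), and then applying the $\kappa_n$-discrete $n$-cells property per block with a cover refining both a star-refinement of $\mathcal{U}$ and $\{U_n, X \setminus \cl{f_n(X)}\}$, so that $g_n$ stays $\mathcal{U}$-close to $f$ while its image is trapped in $U_n$. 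The final gluing and the upgrade from locally finite to discrete via Lemma~\ref{loc.fin.} are exactly as they should be.

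There is, however, one concrete defect: your case analysis does not cover $\kappa = \omega$. Then $\kappa$ has countable cofinality, but there exist \emph{no} cardinals $\lambda \leq \omega$ of uncountable cofinality, so your prescription ``fix increasing $\kappa_n \leq \kappa$ of uncountable cofinality with $\sup_n \kappa_n = \kappa$, e.g.\ successor cardinals of a cofinal sequence'' is impossible to carry out; yet for $\kappa = \omega$ the lemma is not vacuous---it asserts that the countable locally finite approximation property alone implies the $\omega$-discrete $n$-cells property, and this instance is genuinely used in the paper (Proposition~\ref{dcp} via Proposition~\ref{clfap} invokes the lemma for arbitrary infinite $\kappa$). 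The patch is immediate inside your own framework: take the blocks $T_n$ to be singletons and set $g|_{A_n} = f_n \circ f|_{A_n}$; no within-block discreteness is needed since each block contributes a single cell, $g(A_n) \subset f_n(X)$ gives local finiteness of $\{g(A_n)\}_{n<\omega}$ directly, and Lemma~\ref{loc.fin.} finishes as before. With that one-line amendment your proof is complete.
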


A subset $A$ of a metric space $X = (X,d)$ is called \textit{$\delta$-discrete}, $\delta > 0$, provided that for any distinct points $a, a' \in A$, $d(a,a') \geq \delta$.
The following lemma follows from the proof of \cite[Lemma~6.2]{BZ}:

\begin{lem}\label{discr.}
Let $X$ be a space and $\kappa$ be of uncountable cofinality.
For each subset $A \subset X$ of density $\geq \kappa$, it contains a discrete subset of cardinality $\geq \kappa$.
\end{lem}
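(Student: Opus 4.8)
The plan is to build the required discrete subset at a single uniform scale, using the uncountable cofinality of $\kappa$ to locate that scale among countably many candidates. Fix an admissible metric $d$ on $X$. First I would, for each $n < \omega$, invoke Zorn's lemma to choose a subset $D_n \subset A$ that is maximal with respect to being $1/n$-discrete; such maximal elements exist because the union of a chain of $1/n$-discrete sets is again $1/n$-discrete. By maximality, no point of $A$ can be adjoined without destroying $1/n$-discreteness, so every $x \in A$ satisfies $d(x,D_n) < 1/n$, that is, $A \subset N_d(D_n,1/n)$.

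Next I would check that $\bigcup_{n < \omega} D_n$ is dense in $A$: given $x \in A$ and $\epsilon > 0$, choosing $n$ with $1/n < \epsilon$ produces a point of $D_n$ within $\epsilon$ of $x$. Since $\dens A \geq \kappa$, any dense subset of $A$ has cardinality at least $\kappa$, and therefore $\card\bigl(\bigcup_{n < \omega} D_n\bigr) \geq \kappa$.

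The decisive step invokes the hypothesis on $\kappa$. Because $\kappa$ has uncountable cofinality, any countable family of cardinals lying strictly below $\kappa$ is bounded below $\kappa$, and hence a countable union of sets each of cardinality $< \kappa$ has cardinality $< \kappa$. As $\bigcup_{n < \omega} D_n$ has cardinality $\geq \kappa$, it follows that $\card{D_n} \geq \kappa$ for some $n < \omega$. This $D_n$ is $1/n$-discrete, so each of its points is isolated in it (the ball of radius $1/n$ about any point of $D_n$ meets $D_n$ only in that point); thus $D_n$ is a discrete subset of $A$ of cardinality $\geq \kappa$, exactly as required. The one genuinely delicate point is this cofinality argument: it is precisely where the assumption that $\kappa$ has uncountable cofinality is indispensable, since for countable cofinality the pigeonhole step fails. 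The remaining ingredients, namely the Zorn's lemma construction of maximal separated sets and the density of their union, are routine.
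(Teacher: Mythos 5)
Your proof is correct and follows essentially the same route as the paper: maximal $\epsilon$-discrete subsets of $A$ at countably many scales, density of their union by maximality, and the uncountable-cofinality pigeonhole to extract one $D_n$ of cardinality $\geq\kappa$. The only cosmetic difference is that the paper phrases the pigeonhole step as a proof by contradiction, while you argue directly via the contrapositive.
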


\begin{proof}
Let $A \subset X$ be of density $\geq \kappa$.
For each $n < \omega$, take any maximal $2^{-n}$-discrete subset $D(n) \subset A$.
If some $D(n)$ is of cardinality $\geq \kappa$,
 then the proof is finished.
Conversely, suppose that every $D(n)$ is of cardinality $< \kappa$.
Then $\bigcup_{n < \omega} D(n)$ is dense in $A$ due to the maximality of $D(n)$.
On the other hand, since $\kappa$ is of uncountable cofinality,
 the cardinality of the countable union $\bigcup_{n < \omega} D(n)$ is less than $\kappa$,
 which contradicts to that $A$ is of density $\geq \kappa$.
Thus we conclude that $A$ contains a discrete subset of cardinality $\geq \kappa$.
\end{proof}

Now, we will prove the following:

\begin{prop}\label{dcp}
Let $X$ be a locally path-connected and nowhere locally compact space.
If any neighborhood of each point in $X$ is of density $\geq \kappa$,
 then the hyperspace $\fin(X)$ has the $\kappa$-discrete $n$-cells property for every $n < \omega$.
\end{prop}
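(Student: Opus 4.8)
The plan is to reduce the problem to cardinals of uncountable cofinality by means of Lemma~\ref{cof.}, and then to feed the density hypothesis into Lemma~\ref{lfap}. First, since $X$ is locally path-connected and nowhere locally compact, Proposition~\ref{clfap} tells us that $\fin(X)$ has the countable locally finite approximation property. This is exactly the standing hypothesis of Lemma~\ref{cof.}, so it suffices to verify that $\fin(X)$ has the $\lambda$-discrete $n$-cells property for every cardinal $\lambda \leq \kappa$ of uncountable cofinality.

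Fix such a $\lambda$ and an $n < \omega$. Every neighborhood of each point of $X$ has density $\geq \kappa \geq \lambda$, and $\lambda$ is of uncountable cofinality, so Lemma~\ref{discr.} guarantees that every such neighborhood contains a discrete subset of cardinality $\geq \lambda$. Thus $X$, paired with $\lambda$, meets the hypotheses of Lemma~\ref{lfap}. Now, given an open cover $\mathcal{U}$ of $\fin(X)$ and a map $f : \bigoplus_{\gamma < \lambda} A_\gamma \to \fin(X)$ with each $A_\gamma = \I^n$, I would triangulate each cube $A_\gamma$ as the polyhedron $|K_\gamma|$ of a simplicial complex $K_\gamma$, so that $f$ is regarded as a map on $\bigoplus_{\gamma < \lambda} |K_\gamma|$. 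Applying Lemma~\ref{lfap} (with $\lambda$ in place of $\kappa$) produces a map $g$ that is $\mathcal{U}$-close to $f$ and for which $\{g(A_\gamma)\}_{\gamma < \lambda}$ is locally finite in $\fin(X)$. By Lemma~\ref{loc.fin.}, this locally finite approximation property is equivalent to $\fin(X)$ having the $\lambda$-discrete $n$-cells property, which completes the verification for $\lambda$. Combining this with the reduction of the first paragraph gives the $\kappa$-discrete $n$-cells property of $\fin(X)$ for every $n < \omega$.

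The delicate point is the passage through uncountable cofinality. Lemma~\ref{discr.} converts density $\geq \lambda$ into a discrete subset of cardinality $\geq \lambda$ only when $\lambda$ has uncountable cofinality, so a naive direct application of Lemma~\ref{lfap} at $\kappa$ itself would fail whenever $\kappa$ has countable cofinality (for instance $\kappa = \aleph_\omega$), since then the density hypothesis need not yield a discrete subset of full cardinality. The purpose of Lemma~\ref{cof.}, which is unlocked by the countable locally finite approximation property from Proposition~\ref{clfap}, is precisely to sidestep this difficulty by reducing the general statement to the uncountable-cofinality cardinals $\lambda$, where Lemma~\ref{discr.} supplies exactly the discreteness input that Lemma~\ref{lfap} demands.
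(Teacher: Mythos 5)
Your proof is correct and follows essentially the same route as the paper: Proposition~\ref{clfap} to obtain the countable locally finite approximation property, Lemma~\ref{cof.} to reduce to cardinals of uncountable cofinality, Lemma~\ref{discr.} to convert the density hypothesis into discrete subsets, and then Lemma~\ref{lfap} together with Lemma~\ref{loc.fin.} to conclude. Your version merely spells out explicitly (with the auxiliary cardinal $\lambda \leq \kappa$) the reduction that the paper compresses into ``we may assume that $\kappa$ is of uncountable cofinality,'' which is a faithful and slightly more careful rendering of the same argument.
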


\begin{proof}
The hyperspace $\fin(X)$ has the countable locally finite approximation property by Proposition~\ref{clfap}.
Hence we may assume that $\kappa$ is of uncountable cofinality due to Lemma~\ref{cof.}.
Let $n < \omega$ and take any open cover $\mathcal{V}$ of $\fin(X)$.
By virtue of Lemma~\ref{loc.fin.}, it suffices to prove that for any map $g : \bigoplus_{\gamma < \kappa} A_\gamma \to \fin(X)$, where each $A_\gamma = \I^n$,
 there exists a $\mathcal{V}$-close map $h : \bigoplus_{\gamma < \kappa} A_\gamma \to \fin(X)$ to $g$ such that $\{h(A_\gamma)\}_{\gamma < \kappa}$ is locally finite in $\fin(X)$.
It follows from Lemma~\ref{discr.} that any neighborhood of each point in $X$ contains a discrete subset of cardinality $\geq \kappa$.
Using Lemma~\ref{lfap}, we can obtain the desired map $h$.
Thus the proof is completed.
\end{proof}

\section{Proof of the main theorem}

In this final section, applying the characterization~2.1, we prove the main theorem.

\begin{proof}[Proof of the main theorem]
The separable case follows from Theorem~\ref{l2f},
 so we only consider $\kappa$ be uncountable.

(The ``only if'' part)~According to Propositions~\ref{Borel} and \ref{ar}, the hyperspace $\fin(X)$ is a strongly countable-dimensional and $\sigma$-locally compact AR of density $\kappa$.
It follows from Proposition~\ref{str.univ.} that $X$ is strongly universal for the class of finite-dimensional compact metrizable spaces.
Due to Proposition~\ref{cpt.str.Z}, every finite-dimensional compact subset of $\fin(X)$ is a strong $Z$-set in $\fin(X)$.
Moreover, the $\kappa$-discrete $n$-cells property of $\fin(X)$ for every $n < \omega$ follows from Proposition~\ref{dcp}.
Using Theorem~\ref{DCP-char.}, we conclude that $\fin(X)$ is homeomorphic to $\ell_2^f(\kappa)$.

(The ``if'' part)~Since $\fin(X)$ is homeomorphic to $\ell_2^f(\kappa)$,
 it is a strongly countable-dimensional and $\sigma$-locally compact AR,
 and hence the space $X$ is connected, locally path-connected, strongly countable-dimensional and $\sigma$-locally compact by Propositions~\ref{Borel} and \ref{ar}.
Remark that for each $A \in \fin(X)$, all neighborhoods of $A$ are of density $\kappa$.
It follows from Proposition~\ref{nbd.dens.} that any neighborhood of each point in $X$ is also of density $\kappa$.
The proof is complete.
\end{proof}

\end{document}